\documentclass[a4paper,12pt,reqno]{amsart}
\usepackage[utf8]{inputenc}
\usepackage{amssymb,amsthm}
\usepackage{enumerate}
\usepackage{graphicx}
\usepackage{color}

\newtheorem{theorem}{Theorem}[section]
\newtheorem{prop}[theorem]{Proposition}
\newtheorem{cor}[theorem]{Corollary}
\newtheorem{lemma}[theorem]{Lemma}
\newtheorem{remark}[theorem]{Remark}

\newcommand{\R}{\mathbb{R}}


\title[Number of limit cycles]
{Number of limit cycles for planar systems with invariant algebraic curves}
\subjclass[2010]{Primary: 34C07.  Secondary: 37C27}
\keywords{Invariant algebraic curve; Limit cycle; Periodic orbit; Quadratic system; Cubic system; Liénard system}

\author[A. Gasull]{Armengol Gasull}
\address{Departament de Matem\`{a}tiques, Edifici Cc, Universitat Aut\`{o}noma de Barcelona, 08193 Cerdanyola del Vall\`{e}s (Barcelona), Spain.}
\address{Centre de Recerca Matem\`{a}tica, Edifici Cc, Campus de Bellaterra, 08193 Cerdanyola del Vall\`{e}s (Barcelona), Spain.}
\email{gasull@mat.uab.cat}

\author[H. Giacomini]{Hector Giacomini}
\address{ Institut Denis Poisson. Universit\'{e} de Tours,  C.N.R.S. UMR 7013. 37200 Tours, France.} \email{Hector.Giacomini@lmpt.univ-tours.fr}

\date{}

\begin{document}

\begin{abstract} 
For planar polynomials systems  the existence of an invariant algebraic curve limits the number of limit cycles not contained in this curve. We present a general approach to prove non existence of periodic orbits not contained in this given algebraic curve. When the method is applied to parametric families of polynomial systems that have limit cycles for some values of the parameters, our result leads to effective algebraic conditions on the parameters that force non existence of the periodic orbits. As applications we consider several families of quadratic systems: the ones having some quadratic invariant algebraic curve, the known ones having an algebraic  limit cycle,   a  family having a cubic invariant algebraic curve and other ones.   For any quadratic system with two invariant algebraic curves we prove a finiteness result for its number of limit cycles that only depends on the degrees of these curves. We also consider some families of cubic systems having either a quadratic or a cubic invariant algebraic curve and a family of Liénard systems.
We also give a new and simple proof of the known fact that quadratic systems with an invariant parabola have at most one limit cycle. In fact, what we show is that this result is  a consequence of the similar result for quadratic systems  with an invariant straight line.
\end{abstract}

\maketitle

\section{Introduction}

The study of the number of limit cycles of  polynomial differential systems 
has attracted the interest of many researchers due to the unexpected
difficulties that it presents. In fact, even in the case of quadratic systems (QS) the problem remains open.
Perhaps the most influential works for this very particular case
have been the Coppel survey (\cite{Cop1966})   and the Ye Yanqian
{\it et alt.} book (\cite{Ye1986}).

To advance in particular subfamilies of QS people has started studying QS with additional
properties. One of these properties has been the existence of a particular invariant algebraic curve for the QS. One of the reasons is the general belief that for any polynomial system the existence of an invariant algebraic curve for it limits the number of limit cycles not contained on this curve.

 So a first celebrated family has been {\it QS with an invariant straight line}.
 With some effort people has been able to prove the existence of at most one limit
 cycle for this particular case, and also its hyperbolicity, see \cite{CZ1,CZ2,CL,Cop,Ryc}.

Afterwards the attention has moved to QS with irreducible invariant
curves of degree two. In this case it was not difficult prove that
no limit cycles coexist with hyperbolas or ellipses, although, in
the later case, the ellipse itself can be a limit cycle,  see
\cite{Che,Ye1986} and also the proofs of Theorem~\ref{th:teo2} and Proposition~\ref{th:hyper}. In case
of {\it QS having an invariant parabola} it took more effort to prove
the existence and uniqueness of the limit cycle, see \cite{Ch,ZK}.

It is remarkable that it is possible to prove that result as a direct consequence of the
same result for QS with an invariant straight line. This  gives a simple
and alternative proof to that of \cite{ZK}.

\begin{theorem}\label{th:para}
A quadratic system with an invariant parabola and a limit cycle can
be transformed into  another quadratic system with an invariant
straight line. As a consequence, quadratic systems with an invariant
parabola
 have at most one limit cycle and when it exists it is hyperbolic.
\end{theorem}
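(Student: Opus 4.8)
The plan is to deduce the parabola case from the already known straight-line result by an explicit change of variables, exploiting the hypothesis that a limit cycle exists in order to place the transformation correctly.

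First I would normalize. By an affine transformation I may assume the invariant parabola is $\{y=x^2\}$, while the affine symmetries $x\mapsto ax+b,\ y\mapsto a^2y+2abx+b^2$ (which preserve the parabola) together with time rescalings remain available. Writing the system as $\dot{x}=P$, $\dot{y}=Q$ and imposing that $y-x^2$ admits a polynomial cofactor $K=k_0+k_1x+k_2y$, that is $-2xP+Q=(y-x^2)K$, forces a normal form in which $P$ has no $y^2$ term and the coefficients of $Q$ are determined by those of $P$ and by $k_0$; denote by $p_4$ the coefficient of $xy$ in $P$. A key geometric remark is that, since $\{y>x^2\}$ is convex and the parabola is invariant (so no orbit crosses it), every periodic orbit lies entirely in one of the two regions $\{y>x^2\}$ or $\{y<x^2\}$, and a cycle in the convex region has its whole Jordan interior inside $\{y\ge x^2\}$; in particular such a cycle encircles no point of the parabola.

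If $p_4=0$ the proof is immediate: the shear $(u,v)=(x,y-x^2)$, a polynomial automorphism of the plane carrying the parabola to the line $\{v=0\}$, produces a \emph{quadratic} system, because the only terms it would raise to degree three all carry the factor $p_4$. Hence one obtains a quadratic system with an invariant straight line and concludes by the straight-line theorem. If instead $p_4\ne0$, I restrict the field to the parabola, getting the scalar cubic $\dot{x}|_{y=x^2}=p_0+p_1x+(p_2+p_3)x^2+p_4x^3$, which has a real root; this root corresponds to an equilibrium lying on the parabola. Using an affine symmetry I move such an equilibrium to the vertex, so that $\dot{x}(0,0)=0$. Then the rational (Cremona) map $(X,Y)=\bigl(1/x,\ y/x^2\bigr)$, followed by the time rescaling $dt=x\,d\tau$, transforms the system into a quadratic one whose invariant straight line is $\{Y=1\}$, the image of the parabola. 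The condition $\dot{x}(0,0)=0$ is exactly what cancels the otherwise cubic terms, which I would check by direct substitution; and this map is a diffeomorphism from each half-plane $\{x>0\}$, $\{x<0\}$ onto $\{X>0\}$, $\{X<0\}$, with the rescaling factor nonvanishing there.

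Finally I would transport the limit cycle. Using the convexity remark and the boundedness of the cycle in $x$, I choose the on-parabola equilibrium so that the singular vertical line through it is disjoint from the cycle; the cycle then lies in a single half-plane, maps diffeomorphically onto a limit cycle of the target quadratic system, and, as the time rescaling is by a positive factor there, its hyperbolicity type is preserved. Since a quadratic system with an invariant straight line has at most one limit cycle and it is hyperbolic, the same conclusions follow for the original system. The main obstacle is precisely this last correspondence: guaranteeing that a suitable on-parabola equilibrium can be placed outside the $x$-range of the cycle, so that the cycle avoids the singular line, and that limit cycles together with their hyperbolicity are faithfully matched under the rational map. The degree computations in the two previous steps are routine once the correct transformation has been written down.
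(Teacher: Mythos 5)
Your overall strategy --- reducing the parabola case to the known theorem for quadratic systems with an invariant straight line via an explicit change of variables --- is exactly the paper's, and parts of your execution are correct and even slightly cleaner. Your case $p_4=0$ (the paper's $e=0$) is complete: the shear $(u,v)=(x,y-x^2)$ is a polynomial automorphism of the plane, and with $e=0$ one indeed gets $\dot v=(d-2cu)v$ together with a quadratic $\dot u$, so this case reduces to the straight-line theorem with no singular locus to worry about. (The paper splits cases differently --- by existence of an equilibrium on the parabola --- and disposes of its ``no equilibrium'' case by exhibiting functions $N_{\alpha,g}$ of fixed sign; your shear is a genuine, and arguably simpler, alternative there.) Your Cremona map $(X,Y)=(1/x,\,y/x^2)$ for $p_4\ne 0$ also works: after moving an on-parabola equilibrium to the vertex (so $a=0$), multiplying the transformed field by $1/x$ yields a quadratic system with invariant line $Y=1$; note only that your rescaling is written backwards (it should be $d\tau=x\,dt$, i.e.\ multiplication of the field by $X$, not $dt=x\,d\tau$), a cosmetic slip the paper's own proof shares with its map $X=x/(y-x^2)$, $Y=y/(y-x^2)$.

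The genuine gap is the step you yourself flag as ``the main obstacle'': you never prove that a limit cycle avoids the singular line $\{x=0\}$ of your Cremona map. Presenting this as a free choice --- ``choose the on-parabola equilibrium so that the vertical line through it is disjoint from the cycle'' --- is not legitimate: when $p_4\ne0$ the cubic $P(x,x^2)=0$ may have a \emph{unique} real root, and nothing in your convexity and boundedness remarks excludes that the vertical line through that unique equilibrium cuts the cycle. (Not encircling a point of the parabola does not prevent the cycle from straddling it in $x$; every cycle below the parabola straddles infinitely many parabola points.) This is precisely where the paper does its real work: with the equilibrium at the vertex ($a=0$) one computes $\dot y|_{y=0}=(2b-d)x^2$ and $\dot x|_{x=0}=(c+h)y$. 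The first has fixed sign along the whole line, so no periodic orbit can cross $y=0$ (all transversal crossings would be in the same direction, or the line is invariant); hence every cycle lies in an open half-plane $\{y>0\}$ or $\{y<0\}$, where $\dot x|_{x=0}$ has fixed sign, and the same argument shows no cycle crosses $x=0$ either. Thus every periodic orbit lies in one of the six components of $\R^2\setminus\big(\mathcal{P}\cup\{x=0\}\cup\{y=0\}\big)$; this is what legitimizes the birational transformation, guarantees distinct cycles map to distinct cycles of the single target system, and makes the time-rescaling factor of fixed sign along each cycle. Without this (or an equivalent) argument, your proof of the case $p_4\ne0$ --- and hence of the theorem --- is incomplete.
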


Continuing in the world of QS with invariant algebraic curves
several directions  have been appeared: studying the degrees of the
possible invariant curves, the existence of algebraic limit cycles
(limit cycles included in an invariant algebraic curve), the number
of limit cycles of families with higher degree invariant curves,
\ldots. Our second result considers the question of the non
existence of other limit cycles when  the QS has an algebraic limit
cycle. In \cite{LV1,LV2} the authors study a related problem. They give conditions on a  QS to ensure that it has at most one algebraic limit cycle.

 We face the following natural problem, see for instance
\cite{Lli,LS}: Can an algebraic and a non-algebraic limit cycle
coexist in a QS?  Although it is yet an open question, it is known
that all known QS exhibiting an algebraic limit cycle have no other
limit cycle.  A proof for the known cases at that time was given in \cite{CGL}.

 Our second main result proves with an unified method this fact
   for all the eight known families with one algebraic limit cycle. The eight families that we will study, together with the expression of their
   corresponding algebraic limit cycles  are listed in the proof of the following theorem.
   Here, we only make some few comments about them, see the nice paper \cite{AFL} for more details.
   The researchers who have found these families are: Qin; Yablonskii; Filiptsov; Chavarriga; Christopher,
   Llibre and \'Swirszcz (two cases), Chavarriga, Llibre and Sorolla; and Alberich-Carrami\~{n}ana, Ferragut and Llibre.
  
  In fact, for completeness, our proof studies all the eight cases, although from the results of \cite{AFL} it is clear these eight cases can be reduced to four of them because the other ones can be obtained from these four via suitable
 Cremona transformations (birational automorphisms) and changes of time. See more details in \cite[Fig. 10]{AFL}.

 \begin{theorem}\label{th:teo2}
    The eight families of quadratic systems detailed in the proof, and that  nowadays include all the cases of known quadratic systems with an
    algebraic limit cycle, have at most one limit cycle and when it exists it is this algebraic limit cycle.
 \end{theorem}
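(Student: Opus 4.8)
The plan is to treat all eight families by a single Dulac-type mechanism attached to the invariant algebraic curve that carries the algebraic limit cycle, and then to cut the amount of bookkeeping in half by exploiting the Cremona equivalences recorded in \cite[Fig.~10]{AFL}. For each family I would write the field $X=(P,Q)$ and single out the irreducible invariant algebraic curve $f=0$, with cofactor $K_f$, that contains the algebraic limit cycle $\gamma_0$. The decisive structural fact for quadratic systems is that $\operatorname{div}X=P_x+Q_y$ is affine and that every cofactor of an invariant algebraic curve (of any degree) is also affine. I would then use the Dulac function $B=f^{s}$, allowing when one power does not suffice an extra factor $\ell^{t}$ coming from an invariant straight line, or a Darboux exponential factor, and compute $\operatorname{div}(BX)=B\,\mathcal L$ with $\mathcal L=\operatorname{div}X+sK_f+tK_\ell+\cdots$. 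Since every summand is affine, $\mathcal L$ is affine, and the freedom in the exponents $s,t,\dots$ is exactly what lets me either kill $\mathcal L$ or fix its sign.

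Next I would run the Bendixson--Dulac argument off the curve. Any limit cycle $\gamma\neq\gamma_0$ that is not contained in $\{f=0\}$ lies in a single connected component of the complement of the zero/singular set of $B$; on that component $B$ has a constant sign, and if the exponents are chosen so that the affine form $\mathcal L$ keeps a constant sign there, then $\operatorname{div}(BX)$ is of constant sign and Green's theorem over the region bounded by $\gamma$ yields a contradiction. This is precisely the general non-existence mechanism of the paper, and for each family the hypothesis collapses to the non-vanishing, or constant sign, of one explicit linear form in the region of interest, which is the effective algebraic condition announced in the abstract.

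To reduce eight cases to four I would invoke \cite{AFL}: four of the families are obtained from the other four via Cremona transformations together with changes of time. Because such birational maps are diffeomorphisms on the relevant affine region, they send periodic orbits to periodic orbits and invariant algebraic curves to invariant algebraic curves, while time rescalings only reverse orientation; hence it suffices to verify the sign condition for four representative systems and transport the conclusion, although for completeness I would still display all eight. Once periodic orbits off $\{f=0\}$ are excluded, the only possible limit cycle is contained in $\{f=0\}$, so I would finish by analysing the real topology of the irreducible curve $f=0$, showing that it carries a single bounded oval, namely $\gamma_0$, and that the remaining invariant curves (straight lines, or conics without bounded ovals) carry no limit cycle. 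This gives at most one limit cycle, necessarily the algebraic one.

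The main obstacle is twofold. First, the correct choice of Dulac function: one must pick the exponents, and where needed the auxiliary line or exponential factor, so that $\mathcal L$ has the required sign, and at the same time guarantee that the Green--Gauss computation is legitimate despite the singularity of $B=f^{s}$ along the oval, i.e.\ that no admissible closed orbit can enclose a branch of $f=0$; this is the genuinely delicate geometric point and is where the positioning of the line $\{\mathcal L=0\}$ relative to $\gamma_0$ enters. Second, the explicit sign verifications for the higher-degree curves (the quartic, quintic and sextic cases) are the most computation-heavy step, but each one reduces to checking the sign of a single affine form, which is elementary once the curve and its cofactor are written out.
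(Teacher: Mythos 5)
Your overall architecture (rule out periodic orbits off the invariant curve by a sign condition, then inspect the curve itself, optionally halving the work via the Cremona equivalences of \cite{AFL}) matches the paper's, but the engine you propose cannot run on these families. You restrict the Dulac function to Darboux-type factors $B=f^{s}\ell^{t}\cdots$ built exclusively from invariant curves and exponential factors, so that $\mathcal L=\operatorname{div}\mathcal{X}+sK_f+tK_\ell+\cdots$ is affine. A non-constant affine form changes sign across the line $\{\mathcal L=0\}$, so it can never satisfy the needed global one-sign hypothesis: the Bendixson--Dulac (or monotonicity) argument only excludes cycles \emph{entirely contained} in a region where the sign is fixed, and nothing in your proposal prevents a limit cycle off $\{f=0\}$ from crossing the line $\{\mathcal L=0\}$. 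Your only exits are to kill $\mathcal L$ or to make it constant, and both require $\operatorname{div}\mathcal{X}$ to be, up to a constant, a linear combination of cofactors, i.e.\ essentially a Darboux integrating factor; the eight families do not admit this (they generically possess a single irreducible invariant algebraic curve, several are known not to be Liouvillian integrable, and such an integrating factor near the oval would contradict its being a limit cycle for the parameter values where it is one). This fails already in Case 1 (Qin): there $\operatorname{div}\mathcal{X}=(b-2)x-ay$ and $k=-2x$, so $\operatorname{div}\mathcal{X}+sk=(b-2-2s)x-ay$ is a non-constant affine form for every $s$ whenever $a\neq0$, which is exactly when the algebraic limit cycle exists. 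So the step where ``the hypothesis collapses to the sign of one explicit linear form'' is precisely where the proof breaks.

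The missing idea, and the paper's actual mechanism (Theorem~\ref{th:met}), is to couple $|f|^{\alpha}$ with an \emph{arbitrary auxiliary polynomial} $g$ that is not an invariant object: with $v=g|f|^{\alpha}$ one gets $\dot v=|f|^{\alpha}N_{\alpha,g}$, where $N_{\alpha,g}=\alpha k g+g_xP+g_yQ$. Because $\dot g$ is not a cofactor multiple of $g$, this $N_{\alpha,g}$ is a polynomial of degree at least two, and the coefficients of $g$ together with $\alpha$ give enough freedom to force, family by family, $N_{\alpha,g}=r\,w^{2}$ for a constant $r$ and a polynomial $w$: a function of one sign on all of $\R^{2}$, vanishing only on the non-invariant curve $w=0$. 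Then $v$ is monotone along every orbit in each connected component of $\R^{2}\setminus\{f=0\}$, so no periodic orbit exists off the curve; for Qin's system, for instance, $\alpha=b/2$, $g=c+by$ yield $N=abx^{2}$. Two smaller points: Cremona maps are not diffeomorphisms (they have indeterminacy and exceptional loci), so transporting non-existence between equivalent systems requires checking that cycles avoid those loci --- the paper sidesteps this by treating all eight cases directly; and once all limit cycles are confined to $\{f=0\}$, the conclusion follows from the known description of the ovals of these curves, so your final topological step is fine but is not where the difficulty lies.
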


We do not study here the hyperbolicity of the limit cycles
considered in the above theorem. This question  is studied  in
\cite{GG}.

  The  general approach that we use to prove the eight cases of the above theorem is detailed in Section \ref{se:pre}, see Theorem \ref{th:met}, and essentially consists on a  method for effectively finding  a
kind of Lyapunov functions for  systems with an invariant algebraic curve. As we will see, these functions are actually not Lyapunov
functions because only the condition on not changing sign of its temporal derivative is required. The ideas of this approach go back to  Poincaré and have also been used in some cases in \cite{CG,CGL}. In fact, in this work  we will show that the method provides a systematic approach to study many families of planar differential systems, not necessarily QS, with invariant algebraic curves.

 Our third main result deals with QS with two generalized invariant curves and gives an upper bound for their number of limit cycles. The same question but with a different point of view was treated in \cite{CG2,CGS} but our results are different of the ones given in these works.  See for instance \cite{GGine} for several examples of polinomial systems with generalized invariant curves. The definition of these type of curves and what we mean when we say that two of them are different is deferred to the following section. Here we only comment that  they include usual invariant algebraic curves. 
	
\begin{theorem}\label{th:teo3}
	Consider a quadratic system with two different generalized invariant  curves. Let $M$ be the number of limit cycles contained in them. Then its maximum number of limit cycles is $2M+2.$
\end{theorem}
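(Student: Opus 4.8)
The plan is to exploit the fact that, because the system is quadratic, the cofactors $K_1,K_2$ of the two generalized invariant curves $f_1=0,f_2=0$ are affine functions, i.e. of degree at most one. By the method of Theorem~\ref{th:met}, for any limit cycle $\gamma$ not contained in $\{f_i=0\}$ one has $\oint_\gamma K_i\,dt=0$, obtained by integrating $\dot f_i/f_i=K_i$ over the period (here $f_i$ has constant sign on $\gamma$ since $\{f_i=0\}$ is invariant). I would first dispose of the degenerate configurations through a Darboux dichotomy on the coefficient vectors $\mathbf k_1,\mathbf k_2\in\R^3$ of $K_1,K_2$. If $\mathbf k_1,\mathbf k_2$ are linearly dependent, then some combination $\lambda_1K_1+\lambda_2K_2$ vanishes identically, so $f_1^{\lambda_1}f_2^{\lambda_2}$ is a Darboux first integral and the system has no isolated periodic orbit at all (this is where the precise meaning of \emph{two different} curves enters, to exclude the trivial coincidence of the curves). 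If instead $\mathbf k_1,\mathbf k_2$ are independent but the divergence $\operatorname{div}(X)=P_x+Q_y$ lies in their span, then $f_1^{\lambda_1}f_2^{\lambda_2}$ is a smooth inverse integrating factor, and by the classical confinement result every limit cycle lies in $\{f_1f_2=0\}$, so all of them are among the $M$ algebraic ones and the bound holds trivially.

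The main case is $\mathbf k_1,\mathbf k_2$ independent with $\operatorname{div}(X)\notin\operatorname{span}(\mathbf k_1,\mathbf k_2)$. Here I would attach to each limit cycle $\gamma$ the moment vector
\[
\mathbf m(\gamma)=\Bigl(\textstyle\oint_\gamma dt,\ \oint_\gamma x\,dt,\ \oint_\gamma y\,dt\Bigr)\in\R^3 .
\]
For a \emph{non-algebraic} limit cycle (contained in neither curve) both relations $\oint_\gamma K_i\,dt=0$ hold, which say exactly that $\mathbf m(\gamma)\perp\mathbf k_1$ and $\mathbf m(\gamma)\perp\mathbf k_2$; hence $\mathbf m(\gamma)$ lies on the fixed line $\R\,\mathbf v$ with $\mathbf v=\mathbf k_1\times\mathbf k_2$. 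Writing $\mathbf m(\gamma)=s\,\mathbf v$, the sign of the scalar $s$ is forced to be constant over all such cycles because the first component $\oint_\gamma dt>0$. The characteristic exponent of $\gamma$ is $\oint_\gamma\operatorname{div}(X)\,dt=\mathbf d\cdot\mathbf m(\gamma)=s\,(\mathbf d\cdot\mathbf v)$, where $\mathbf d$ is the coefficient vector of $\operatorname{div}(X)$; since $\mathbf d\cdot\mathbf v\neq0$ precisely in this case, every non-algebraic limit cycle is hyperbolic with characteristic exponent of one and the same sign. Thus all non-algebraic limit cycles share the same stability character.

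The counting is then combinatorial. Along any nest of limit cycles surrounding a fixed singular point, the stability of consecutive hyperbolic cycles alternates (two attracting fixed points of the return map on a transversal must sandwich a repelling one). Since all non-algebraic cycles are hyperbolic of the same type, two of them cannot be consecutive in a nest: between any two there must lie a cycle of the opposite type, necessarily one of the $M$ algebraic ones. Assigning to each gap a distinct algebraic cycle gives, in each nest, $\#\{\text{non-algebraic}\}\le\#\{\text{algebraic}\}+1$. Summing over nests and using the classical structural fact that a quadratic system has limit cycles in at most two nests (see \cite{Ye1986}) yields $\#\{\text{non-algebraic}\}\le M+2$, and therefore a total of at most $2M+2$ limit cycles.

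The step I expect to be the main obstacle is the rigorous treatment of the boundary phenomena that the clean generic argument glosses over: multiple (semistable) limit cycles, for which the alternation/hyperbolicity bookkeeping must be phrased in terms of the multiplicity of fixed points of the return map, and the exact verification that the degenerate algebraic cases ($\mathbf k_1\parallel\mathbf k_2$, or $\operatorname{div}(X)$ in the cofactor span) really force integrability or confinement under the weaker regularity of \emph{generalized} invariant curves. A secondary delicate point is pinning down the structural input on the number of nests and ensuring the moment relation $\oint_\gamma K_i\,dt=0$ is valid for the generalized curves exactly as furnished by Theorem~\ref{th:met}.
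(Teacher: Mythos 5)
Your proposal is correct, and its analytic core is exactly the paper's: integrate the cofactors and the divergence over a putative limit cycle, assemble the resulting linear relations, and conclude that every limit cycle not contained in the two curves is hyperbolic with one and the same stability. Your cross-product formulation is just Cramer's rule for the paper's matrix $A$ (indeed $\mathbf d\cdot(\mathbf k_1\times\mathbf k_2)=\det(A)$ and the first component of $\mathbf k_1\times\mathbf k_2$ is $b_1c_2-b_2c_1$), and your two degenerate cases (cofactors dependent, giving a Darboux first integral; divergence in their span, giving an integrating factor) are the paper's case $\det(A)=0$ split according to whether the divergence enters the vanishing combination. The paper's remaining degenerate case, $b_1c_2-b_2c_1=0$ with $\det(A)\neq0$, is silently absorbed by your main case: there the first component of $\mathbf v$ vanishes, so $\oint_\gamma dt=s\,v_1>0$ is impossible and no non-algebraic cycle exists at all; you should state this explicitly, since your sentence about the sign of $s$ tacitly assumes $v_1\neq0$. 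The one genuinely different step is the final count: the paper invokes Zhang Pingguang's theorem (\cite{P}) that a QS with two nests has one nest consisting of a single limit cycle, and counts $1+M+(M+1)$; you use only the classical fact that QS limit cycles lie in at most two nests and do per-nest bookkeeping ($N_i\le M_i+1$ non-algebraic cycles interleaved with $M_i$ algebraic ones, $M_1+M_2=M$), getting $\sum_i (2M_i+1)\le 2M+2$. This is more self-contained, and notably the paper's introduction asserts that avoiding \cite{P} would yield a bigger bound, whereas your accounting shows it does not. Two small blemishes, neither fatal: in your case (B) the function $f_1^{\lambda_1}f_2^{\lambda_2}$ is an integrating factor (its reciprocal is the inverse integrating factor to which the confinement theorem applies, component by component of the complement of the curves); and in case (A) the claim that there are no isolated periodic orbits at all is stronger than needed and requires extra care for cycles lying inside the curves --- for the bound it suffices, as in the paper, to exclude limit cycles off the curves, since those on the curves are already counted in $M$.
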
	
	
As we will see, our proof starts with the result of \cite{P} that shows that if a QS has two different nests of limit cycles then one of the critical points is surrounded by a single limit cycle. A similar result, but with a bigger upper bound for the number of limit cycles, could be obtained  under our hypotheses	without using that result.

 Another key point in our proof  is to prove that all the limit cycles not contained in the generalized invariant curves are hyperbolic and have the same stability. As we will see, to prove this fact we find some algebraic relations between the characteristic exponents of the periodic orbits and some integrals involving  the cofactors of the curves. It is also possible to prove this result by using the generalized Bendixson-Dulac Theorem, see for instance \cite{GG3,GG2}.

We remark that although most probably this upper bound is not sharp (in fact, people believe that this type of QS does  not have limit cycles) it is never easy to found upper bounds for the number of limit cycles of a given QS.  We also remark that although after the work of Bamon (\cite{Bam}) the finiteness of the number of limit cycles for a given QS is known, the existence of a uniform bound for all QS is yet an open problem.

We also prove the following corollaries:

\begin{cor}\label{cor:2men}
	If we add to the hypotheses of Theorem \ref{th:teo3}  that the two generalized invariant curves do not have isolated real points, then the upper bound of the theorem can be reduce to be $2M$ and moreover all these limit cycles are nested.
\end{cor}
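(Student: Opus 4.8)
The plan is to refine the proof of Theorem~\ref{th:teo3} by tracking exactly where the two extra units in the bound $2M+2$ come from, and then to show that the hypothesis of having no isolated real points removes precisely these two contributions. Recall from that proof that one produces real exponents $\alpha,\beta$ (depending on the cofactors $K_1,K_2$ of the two generalized curves $f_1=0$, $f_2=0$) such that, with $V=|f_1|^{\alpha}|f_2|^{\beta}$, the function $\operatorname{div}(X/V)$ has constant sign on the open set $U=\R^2\setminus\{f_1f_2=0\}$. The generalized Bendixson--Dulac theorem (\cite{GG2,GG3}) then gives that every limit cycle contained in $U$ is hyperbolic, that they all share the same stability, and that their number is bounded by the first Betti number $b_1(U)$, i.e.\ by the number of independent ``holes'' of $U$. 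Since the $M$ limit cycles lying on the curves are among the compact ovals of $\{f_1f_2=0\}$, the total count splits as $M+\#\{\text{limit cycles in }U\}\le M+b_1(U)$, and the content of the corollary is to prove $b_1(U)\le M$ together with the nested configuration.

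First I would make the topological bookkeeping explicit. Each compact oval of $\{f_1f_2=0\}$ and each isolated real point of the zero set contributes one unit to $b_1(U)$, while the unbounded branches and every simply connected complementary region (in particular the disk bounded by an innermost oval) contribute nothing. In the clean situation of a single nest, the $M$ curve--limit cycles are nested ovals $O_1\subset\cdots\subset O_M$ about one focus $p$; the complementary regions are the simply connected disk inside $O_1$, the $M-1$ annuli between consecutive ovals, and the unbounded exterior of $O_M$, giving exactly $b_1(U)=M$ and hence the desired total $\le 2M$, with all cycles nested. The excess ``$+2$'' in Theorem~\ref{th:teo3} therefore has to come from extra holes: either from isolated real points of the curves, or from the second nest allowed by the result of \cite{P}.

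The key structural lemma I would isolate is that an isolated real point $q$ of an invariant curve $f_i=0$ is automatically a critical point of the system, of index $+1$. Indeed, near such a $q$ the function $f_i$ keeps a constant sign and attains a strict local extremum, so $\nabla f_i(q)=0$; differentiating the invariance relation $Xf_i=K_i f_i$ at $q$ yields $(P(q),Q(q))\cdot\operatorname{Hess}f_i(q)=0$, and nondegeneracy of the extremum forces $(P(q),Q(q))=0$, the small level ovals $\{f_i=\ep\}$ encircling $q$ showing that the index is $+1$. Consequently an isolated real point sitting at a focus turns the otherwise simply connected interior of the innermost cycle into a punctured disk, raising $b_1(U)$ by one and allowing one extra limit cycle around that focus; this is exactly the mechanism producing the surplus cycles. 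Removing all isolated real points deletes these punctures, so that the only holes left are the nested annuli and exteriors described above, which forces $b_1(U)\le M$ and the total bound $2M$.

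The hard part will be upgrading this count to the assertion that all the cycles are nested, i.e.\ ruling out a genuine second nest rather than merely bounding its contribution through \cite{P}. My plan is to argue that, in the absence of isolated real points, the interior of the innermost cycle of every nest is a true simply connected region on which $\operatorname{div}(X/V)$ has constant sign, so by the classical Bendixson--Dulac theorem it can contain no periodic orbit; combined with the fact that two limit cycles in $U$ of the same stability cannot be consecutive without a curve--limit cycle separating them, this confines all the cycles to a single chain of nested ovals about one antisaddle. Making the transition from ``no acnode at the second focus'' to ``no second nest at all'' rigorous --- reconciling the hole count with the index balance of the finitely many critical points of the quadratic field --- is the delicate step, and is where the cited result of \cite{P} and the same-stability property must be combined with care. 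Once the single nested chain is established, the bound $b_1(U)\le M$ gives at most $M$ cycles off the curves and hence at most $2M$ in total, and their nestedness is immediate, completing the proof.
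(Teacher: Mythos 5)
Your strategy (a Dulac function $|f_1|^{\alpha}|f_2|^{\beta}$, the generalized Bendixson--Dulac theorem, and a count of the holes of $U=\R^2\setminus\{f_1f_2=0\}$) is the alternative route the paper only mentions in passing; it is not what the paper does, and as written it has genuine gaps. First, your starting point misremembers the proof of Theorem~\ref{th:teo3}: that proof produces no Dulac function at all, but works with the integrals of the cofactors along a periodic orbit, yielding $A(T,X,Y)^{T}=(h,0,0)^{T}$ and hence $h=\det(A)T/(b_1c_2-b_2c_1)$. So the function $V$ whose existence you ``recall'' is never constructed in your argument, its sign condition is never verified, and the degenerate cases $(b_1c_2-b_2c_1)\det(A)=0$ (which the paper disposes of separately via Remark~\ref{re:1}, getting no limit cycles off the curves) are not treated. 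Second, the inequality $b_1(U)\le M$ is unjustified: the holes of $U$ are created by \emph{all} compact ovals of $\{f_1f_2=0\}$, not only by those that are limit cycles. An oval carrying critical points, for instance, is not a limit cycle but still produces a hole, so your count can exceed $M$ and the bound $2M$ does not follow from the topology alone.

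The most serious gap is the one you flag yourself as ``the delicate step'' and leave open: ruling out a second nest, i.e.\ nestedness. This is precisely where the paper's key (and elementary) observation enters, and it is absent from your proposal: if $(\bar x,\bar y)$ is a critical point with $f_i(\bar x,\bar y)\ne0$, then $\dot f_i(\bar x,\bar y)=k_i(\bar x,\bar y)f_i(\bar x,\bar y)=0$ forces $k_1(\bar x,\bar y)=k_2(\bar x,\bar y)=0$; since the cofactors are affine, this linear system has at most one solution (and in the degenerate case of proportional cofactors one gets a first integral and no limit cycles at all). Since a focus surrounded by a limit cycle and lying on an invariant curve would have to be an isolated real point of that curve --- excluded by hypothesis --- every limit cycle must surround that single point $(\bar x,\bar y)$, which is exactly the nestedness assertion and simultaneously eliminates the second nest coming from \cite{P}. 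Your Hessian lemma (isolated real points are index $+1$ critical points) goes in the converse direction and does not substitute for this. Finally, note how the paper removes the last potential cycle, the one between the focus and the innermost curve-cycle: not by topology, but by a stability comparison, namely $A(1,\bar x,\bar y)^{T}=(d,0,0)^{T}$ gives $d=\operatorname{div}(\mathcal{X})(\bar x,\bar y)=\det(A)/(b_1c_2-b_2c_1)$, the same sign as the characteristic exponent of every non-curve limit cycle, and a focus cannot have the same stability as the innermost hyperbolic limit cycle surrounding it. In your scheme this exclusion would follow from the innermost region being hole-free, but only after the nestedness and the oval count have been established, which is exactly what is missing.
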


Notice that this corollary extends the classical result that Lotka-Volterra QS, that is QS with two non parallel invariant straight lines, do not have limit cycles. This is so because in this case $M=0$ and of course these curves do not contain real isolated points. See also Subsection  \ref{se:lv} for a different and direct proof.

The following result gives in the case of invariant algebraic curves  an upper bound only in terms of the degrees of these curves and extends again the result for Lotka-Volterra QS. 

\begin{cor}\label{cor}
Consider a quadratic system with two different invariant  irreducible algebraic curves of degrees $m_1$ and $m_2$. Then its maximum number of limit cycles  is $2([m_1/2]+[m_2/2])+2,$ where $[\,\,]$ is the integer part function. Moreover, if the two algebraic invariant curves do not have real isolated points, then the upper bound	 reduces to $2([m_1/2]+[m_2/2]).$
\end{cor}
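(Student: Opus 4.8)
The plan is to derive Corollary \ref{cor} directly from Theorem \ref{th:teo3} and Corollary \ref{cor:2men}, so the real work is to translate the purely algebraic hypotheses about degrees into the two quantities appearing there: the number $M$ of limit cycles contained in the curves, and whether the curves have real isolated points. Let me sketch this.

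First I would recall that Theorem \ref{th:teo3} already applies, since every irreducible invariant algebraic curve is in particular a generalized invariant curve, and two distinct irreducible algebraic curves are \emph{different} in the required sense. Thus the maximum number of limit cycles is $2M+2$, with the improvement to $2M$ of Corollary \ref{cor:2men} whenever neither curve has real isolated points. So the entire content of Corollary \ref{cor} reduces to the single bound
\begin{equation*}
M \le [m_1/2] + [m_2/2].
\end{equation*}

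The key step is therefore to bound the number of limit cycles that can lie on an irreducible algebraic curve of degree $m$ by $[m/2]$. I would argue as follows. A limit cycle is an isolated closed curve in $\R^2$; if it is contained in the zero set of an irreducible polynomial $f$ of degree $m$, then it forms an \emph{oval}, i.e.\ a bounded connected component of the real algebraic curve $\{f=0\}$ that is topologically a circle. The crucial classical fact I would invoke is Bézout's theorem applied to a generic line: a line meets $\{f=0\}$ in at most $m$ points (counting multiplicity), and since each oval is a closed convex-position-free loop, a line crossing into the region bounded by an oval must cross back out, contributing at least two intersection points per oval that it actually enters. Choosing a line that meets all the relevant ovals then shows that the number of ovals, hence the number of limit cycles on the curve, is at most $[m/2]$. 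Summing over the two curves gives $M \le [m_1/2]+[m_2/2]$, and substituting into the bounds of Theorem \ref{th:teo3} and Corollary \ref{cor:2men} yields the stated results.

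The main obstacle I anticipate is making the ``two intersections per oval'' counting argument fully rigorous with respect to multiplicities and the choice of line. One must ensure a line exists that is transverse to the curve and simultaneously separates or pierces all ovals carrying limit cycles; genericity of the line handles transversality and the avoidance of singular points, but one has to be careful that an oval is genuinely a Jordan curve bounding a region, so that topological degree (or simply the intermediate value theorem applied to $f$ along the line) forces an even number of crossings on that oval with the correct sign behaviour. A clean way to avoid these subtleties is to note that each oval separates the plane, so a generic line through the interior of one oval meets that oval in at least two points; taking a line through the interiors of as many nested or disjoint ovals as possible and invoking Bézout then bounds their number by $\lfloor m/2\rfloor$. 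For the last sentence of the corollary one additionally observes that real isolated points of the curve correspond exactly to the obstruction removed in Corollary \ref{cor:2men}, so their absence is precisely the hypothesis needed to pass from $2M+2$ to $2M$.
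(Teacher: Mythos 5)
Your overall strategy coincides with the paper's: reduce everything, via Theorem \ref{th:teo3} and Corollary \ref{cor:2men}, to the single bound $M\le [m_1/2]+[m_2/2]$, and obtain that bound by cutting the invariant curve with a straight line, counting at least two intersection points per oval against the B\'ezout bound of $m$ points. Your treatment of the reduction and of the ``no real isolated points'' refinement is fine and matches the paper.

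There is, however, a genuine gap in the key counting step: you never establish that a \emph{single} line exists that pierces the interiors of all the ovals carrying limit cycles, and your proposed fix (``taking a line through the interiors of as many nested or disjoint ovals as possible'') does not close it --- if the maximal number of ovals one line can pierce is smaller than the total number of ovals, B\'ezout gives no information about the remaining ones. This is not a technicality that genericity or transversality can repair, because the statement ``an irreducible real curve of degree $m$ has at most $[m/2]$ ovals'' is false in general: by Harnack's theorem a smooth irreducible curve of degree $m$ can have up to $(m-1)(m-2)/2+1$ ovals, which already exceeds $[m/2]$ for $m=4$ (three mutually external ovals admit no common transversal line). The missing ingredient is dynamical, not algebraic, and it is exactly what the paper invokes: by the classical results on quadratic systems used in the proof of Theorem \ref{th:teo3} (in particular \cite{P} and \cite{Ye1986}), the limit cycles of a QS are convex and form at most two nests, each nest surrounding a focus. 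Hence all limit cycles lying on a fixed invariant curve are simultaneously pierced by any line through the surrounded focus (or through the two foci in the two-nest configuration), and convexity gives exactly two intersection points per cycle, yielding at most $[m_i/2]$ such cycles on the curve of degree $m_i$. Your argument becomes correct once this fact --- which was available to you from the setting of Theorem \ref{th:teo3} but never invoked --- is inserted in place of the genericity discussion.
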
	

In Corollary \ref{co:2cl} we also prove that under some more additional conditions a QS with two generalized invariant curves have  in some cases at most two limit cycles and in other ones do not have limit cycles. We do not include the precise statements in this introduction because we would need to introduce some more notations and  definitions.

In Section \ref{se:final}, we apply Theorem \ref{th:met} to other QS to prove non existence of limit cycles,  and also  to give  conditions for non existence of limit cycles for several parametric families of QS having some invariant algebraic curve,  such that for some values of their parameters they have a limit cycle not contained in this curve.

Finally, to illustrate that Theorem~\ref{th:met} is also useful to families not necessarily quadratic we dedicate Section~\ref{se:of} to other parametric families of planar systems that have some invariant algebraic curve and limit cycles, not contained in these curves, for some values  of the parameters. Specifically, in Subsection \ref{ss:cs} we study two families of cubic systems and in Subsection \ref{ss:ls} we give some results on Liénard systems.

\section{Preliminary results}
\label{se:pre}

Consider a polynomial differential system
\begin{equation}\label{eq:gen}
    \dot x= P(x,y),\quad \dot y=Q(x,y)
\end{equation}
and denote by $\mathcal{X}=(P,Q)$ its associated vector field. Recall that it
is  said that it has an invariant algebraic curve $f(x,y)=0$ if $f$
is irreducible and  it holds that
\begin{equation}\label{eq:iac}
\dot f(x,y)=\frac{\partial f(x,y)}{\partial x}
P(x,y)+\frac{\partial f(x,y)}{\partial y} Q(x,y)=k(x,y)f(x,y),
\end{equation}
for some polynomial $k(x,y),$ called the {\it cofactor} of $f.$
Note that if the set $\{(x,y)\in\R^2\,:\, f(x,y)=0\}$ is not empty, then
it is invariant by the flow of \eqref{eq:gen}. Observe also that if
the degree of $\mathcal{X}$ (i. e., the maximum of the degrees of $P$ and $Q$)
is $n,$ then the degree of $k$ is at most $n-1.$

A simple, but key result for proving our results  will be the next theorem. As we have already explained it is used in some particular cases in \cite{CG,CGL} and here it is presented as a systematic method for studying the non existence of periodic orbits of polynomial differential systems with invariant algebraic curves.

\begin{theorem}\label{th:met}
    Consider the polynomial differential system \eqref{eq:gen} and assume
     that it has an invariant algebraic curve $f(x,y)=0$ with cofactor $k(x,y).$ For each $\alpha\in\R$ and each polynomial $g(x,y)$ define the new polynomial
    \[
    N_{\alpha,g}(x,y)=\alpha k(x,y)g(x,y)+ \frac{\partial g(x,y)}{\partial x}
    P(x,y)+\frac{\partial g(x,y)}{\partial y} Q(x,y).
    \]
    Then, if for some $\alpha$ and $g,$  $N_{\alpha,g}$ does not change sign
     and vanishes only on some algebraic curve that it is not invariant by  the
      flow, then the only limit cycles of the system (if any) are included in the invariant algebraic curve $f(x,y)=0.$

    Moreover,  if for some $\alpha$ and $g,$  $N_{\alpha,g}(x,y)\equiv0$ then $g(x,y)|f(x,y)|^\alpha,$ if it is not piecewise constant, is a
    first integral of \eqref{eq:gen} and the same conclusion holds.
\end{theorem}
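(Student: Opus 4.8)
The plan is to build a function whose orbital derivative is governed by $N_{\alpha,g}$. Set $V(x,y)=g(x,y)\,|f(x,y)|^{\alpha}$, defined on the open set $\{f\neq 0\}$. Using the chain rule together with the invariance identity $\dot f=k f$ from \eqref{eq:iac}, I would compute
\[
\dot V=\dot g\,|f|^{\alpha}+g\,\alpha\,|f|^{\alpha}\frac{\dot f}{f}=|f|^{\alpha}\bigl(\alpha k g+g_xP+g_yQ\bigr)=|f|^{\alpha}\,N_{\alpha,g},
\]
where $\dot g=g_xP+g_yQ$. This single identity is the engine behind both assertions, so I would establish it first and carefully, paying attention to the domain $\{f\neq0\}$ on which $|f|^{\alpha}$ is a well-defined, smooth, single-valued function.

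For the first assertion, suppose $\gamma$ is a limit cycle not contained in $\{f=0\}$. Since $\{f=0\}$ is invariant, any intersection $\gamma\cap\{f=0\}$ would force the whole orbit $\gamma$ into $\{f=0\}$; hence $\gamma\cap\{f=0\}=\emptyset$ and $f$ has constant sign on the compact curve $\gamma$, so $|f|^{\alpha}>0$ there and $V$ is smooth along $\gamma$. Integrating $\dot V$ over one period $T$ and using that $V$ is single-valued gives $0=\oint_\gamma \dot V\,dt=\int_0^T |f|^{\alpha}\,N_{\alpha,g}\,dt$. Because $|f|^{\alpha}$ is strictly positive and $N_{\alpha,g}$ does not change sign, the integrand has constant sign, so it must vanish identically; thus $N_{\alpha,g}\equiv 0$ along $\gamma$, i.e. $\gamma\subset\{N_{\alpha,g}=0\}$.

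It remains to contradict this with the non-invariance hypothesis, and I expect this to be the main obstacle. The point is that a periodic orbit is an infinite invariant oval, and I would argue it cannot be swallowed by a non-invariant algebraic curve: $\gamma$ lies on some irreducible component $\{h=0\}$ of $\{N_{\alpha,g}=0\}$, and since the vector field is tangent to $\gamma$ at each of its (infinitely many) points, $\dot h=h_xP+h_yQ$ vanishes on all of $\gamma$. As $h$ is irreducible and $\dot h$ vanishes on an infinite subset of the real curve $\{h=0\}$, a Study/B\'ezout-type divisibility argument yields $h\mid\dot h$, so $\{h=0\}$ is invariant, contradicting that $\{N_{\alpha,g}=0\}$ is not invariant. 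Hence no such $\gamma$ exists and every limit cycle lies in $\{f=0\}$.

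For the second assertion, if $N_{\alpha,g}\equiv0$ then the identity gives $\dot V\equiv0$ on $\{f\neq0\}$, so $V=g|f|^{\alpha}$ is a first integral there. When $V$ is not piecewise constant, a non-constant first integral prevents limit cycles in its domain: a limit cycle would sit inside one level set of $V$ while nearby level sets are also unions of orbits, so it could not be isolated among periodic orbits. Thus again every limit cycle must lie in $\{f=0\}$, completing the proof.
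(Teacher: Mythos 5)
Your proposal is correct and rests on exactly the same engine as the paper's proof: the auxiliary function $V=g\,|f|^{\alpha}$ and the identity $\dot V=|f|^{\alpha}N_{\alpha,g}$, used on the set $\{f\neq 0\}$, whose connected components are invariant because $f=0$ is an invariant curve. The differences are only in the finishing step. The paper argues via monotonicity: since $\dot V$ has a fixed sign, $V$ is monotone along orbits, so no periodic orbit can exist in a component unless $\dot V$ vanishes identically along it; you instead integrate $\dot V$ over a period to get $\int_0^T|f|^{\alpha}N_{\alpha,g}\,dt=0$ and conclude $N_{\alpha,g}\equiv0$ on $\gamma$ --- the same mechanism phrased integrally. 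Where you genuinely diverge is in deriving the final contradiction: you add a B\'ezout/Study divisibility argument to upgrade ``$\gamma\subset\{N_{\alpha,g}=0\}$'' to ``some irreducible component of $\{N_{\alpha,g}=0\}$ is an invariant algebraic curve.'' The paper does not need this: it reads the hypothesis ``$\{N_{\alpha,g}=0\}$ is not invariant by the flow'' as excluding precisely that the curve contain a whole orbit, so the contradiction is immediate once $\gamma\subset\{N_{\alpha,g}=0\}$ is known. If you keep the algebraic detour, two points need repair: (a) you assert without justification that $\gamma$ lies on a \emph{single} irreducible component --- this requires an argument (e.g., all but finitely many points of $\gamma$ are regular points of the curve lying on exactly one component, and only at such points does $\gamma\subset\{N_{\alpha,g}=0\}$ give $\dot h=0$ \emph{as a tangency condition on the component}; the pointwise identity $\dot h=0$ on $\gamma$ is only automatic once $\gamma\subset\{h=0\}$ is established); and (b) an invariant irreducible component does not literally contradict non-invariance of the whole curve as a set, so in the end you too must invoke the ``contains no orbit'' reading of the hypothesis, which makes the detour superfluous. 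Your treatment of the case $N_{\alpha,g}\equiv0$ (first integral, hence no isolated periodic orbits off $\{f=0\}$) matches the paper's equally terse argument.
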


\begin{proof}
    We will use the following well know fact: If for some open set $\mathcal{U}\subset\R^2,$
    there exists a class $\mathcal{C}^1$ function such that $v:\mathcal{U}\to\R,$ and
    \begin{equation}\label{eq:vdot}\dot v(x,y)=\frac{\partial v(x,y)}{\partial x}
    P(x,y)+\frac{\partial v(x,y)}{\partial y} Q(x,y)\end{equation}
does not vanish then the system \eqref{eq:gen} does not have periodic
orbits totally contained in $\mathcal{U}.$ This is so because while
the solution is in $\mathcal{U}$ the function $t\rightarrow
v(x(t),y(t)),$    where $(x(t),y(t))$ is any solution of the
differential equation, is monotonous. Clearly, this fact prevents
the existence of periodic orbits. If the  right hand side of
\eqref{eq:vdot} vanishes on some curve, but does not change it sign,
then the same holds, unless this curve is invariant by the flow.

 Since $f(x,y)=0$ is an invariant algebraic curve, each  of the connected components of $\mathcal{U}\setminus\{f(x,y)=0\}$ is invariant.
For proving the theorem we  apply the above result by taking $v(x,y)=g(x,y)|f(x,y)|^\alpha$  and   $\mathcal{U}$
any of these components. Some computations give that
\[
\dot v(x,y)= |f(x,y)|^\alpha N_{\alpha,g}(x,y)
\]
and hence the result follows under the hypotheses on $N_{\alpha,g}.$ Clearly when $N_{\alpha,g}(x,y)\equiv0$  then $\dot v(x,y)\equiv 0$ and $v$ is a first integral of \eqref{eq:gen}. Then although periodic orbits can exist, the only possible limit cycle are contained in $\{f(x,y)=0\},$ as we wanted to prove.
\end{proof}

For practical applications one idea  to prove that $N_{\alpha,g}$ does not change sign, and that works in many situations, is to introduce some parameters in the unknown function $g,$ in such a way that a clever choice of them allows to prove the existence of some $r\in\R$ and some polynomial $w$ such that   $N_{\alpha,g}(x,y)= r w^2(x,y).$   Another useful approach is to try to force that $N$ only depends on one of the variables and then impose that all its roots be double.

Now  we give two remarks with two extensions of Theorem~\ref{th:met}.	

\begin{remark}\label{re:1}
	When  system \eqref{eq:gen} has two  different invariant irreducible algebraic curves $f_i(x,y)=0, i=1,2,$ with respective cofactors $k_i(x,y), $ a similar approach can be used. For each $\alpha_1,\alpha_2\in\R$ and each polynomial $g(x,y)$ we can define the new polynomial
	\begin{align*}
	N_{\alpha_1,\alpha_2,g}(x,y)& =\big(\alpha_1 k_1(x,y)+\alpha_2 k_2(x,y)\big)g(x,y)\\&\quad+ \frac{\partial g(x,y)}{\partial x}
	P(x,y)+\frac{\partial g(x,y)}{\partial y} Q(x,y).
	\end{align*}
Then under the same hypotheses that in Theorem \ref{th:met} for this new function $N$ the same conclusions hold. 
\end{remark}

Given any smooth  function $f(x,y)$, not necessarily polynomial, we will say that $f(x,y)=0$ is a generalized invariant  curve for~$\mathcal{X}$ if $\dot f (x,y)= k(x,y)f(x,y),$ as in \eqref{eq:iac}, with $k$ also a polynomial of degree at most $n-1,$ that is, one less that the degree of the vector field. This $k$ is called its cofactor.  Given two generalized invariant curves we will say that they are different if it  do not exist any $\alpha,\beta\in\R$ such that  $|f_1(x,y)|=\alpha|f_2(x,y)|^\beta.$

\begin{remark}\label{re:2}
  Theorem \ref{th:met} and Remark \ref{re:1} also hold if in their respective statements the invariant algebraic curves are replaced by generalized invariant  curves.
\end{remark}

\section{Proof of Theorem \ref{th:para}} It is not restrictive to suppose that the
invariant parabola $\mathcal{P}$ is $f(x,y)=y-x^2=0.$ It is well known that QS  having the invariant parabola $\mathcal{P}$ can be written as
    \begin{equation}\label{eq:para}
        \begin{cases}
            \dot x=a+bx+hy +c(y-x^2)+exy=P(x,y),\\
            \dot y=2x(a+bx+hy)+d(y-x^2)+2ey^2=Q(x,y),
        \end{cases}
    \end{equation}
    where $a,b,c,d,e,h$ are arbitrary real parameters, see \cite{Ch}.
    In fact the cofactor of $f$ is $k(x,y)=-2cx+2ey+d.$ Moreover, the critical points
     on $\mathcal{P}$ are of the form $(x_0,x_0^2)$ where $x_0$ is one of the real roots of
    \[P(x,x^2)=ex^3+hx^2+bx+a=0.\] Next, we split our proof in two cases:
    \begin{enumerate}[(i)]
        \item  The above equation has no real roots. Then in particular $e=0.$
        \item The above equation has some real root, say $x=x_0.$
    \end{enumerate}

    We split again case (i)  in two subcases $c=0$ and $c\ne0$ and in both of them we will
    apply Theorem \ref{th:met} for proving the non existence of limit cycles.

    Consider first the case (i) with $c=0.$ Then, since $e=0,$ by taking $g=1$
    and $\alpha=1$ in that theorem we get that $N(x,y)=k(x,y)=d,$ and the result follows.

    For the situation (i) with $c\ne0,$ recall that $e=0.$ We consider
    \[
    g(x,y)=2ac^2+bcd+d^2h-2ch(dx-cy)\quad \mbox{and}\quad \alpha=h/c
    \]
    and recall that $k(x,y)=-2cx+d.$ Then some computations give that
    \[
    N(x,y)=\frac{h(2cx-d)^2(bc+dh)}c
    \]
    and again the result follows.

    In case (ii) we will prove that if there exists a limit cycle it is unique and hyperbolic by transforming
    these QS to new QS with an invariant straight line. For these QS, as we have already commented, the uniqueness and hyperbolicity
    follows by the results of \cite{CZ1,CZ2,CL,Cop,Ryc}.

    Following \cite{Ch} we start proving that in case (ii) it is not restrictive to
    consider $a=0$ in \eqref{eq:para}. Take the new coordinates $X=x-x_0$ and $Y=x_0^2-2x_0x+y.$ In these
    coordinates $y-x^2=0$ is transformed into $Y-X^2=0.$ Moreover, system \eqref{eq:para} writes as
    \begin{equation*}
        \begin{cases}
            \dot X= BX+HY +C(Y-X^2)+eXY,\\
            \dot Y= 2X(BX+HY)+D(Y-X^2)+2eY^2,
        \end{cases}
    \end{equation*}
    where
    \begin{align*}
        B &= b+2hx_0+3ex_0^2,\quad C = c-2ex_0,\\D &=  d-2cx_0 +2ex_0^2, \quad H = h+3ex_0.
    \end{align*}
    Hence, from now on, we consider system \eqref{eq:para} with $a=0.$
    Let $\gamma$ be a limit cycle of this system. It is totally contained in one of the six
    connected components of $\R^2\setminus\left(\mathcal{P}\cup\{x=0\}\cup\{y=0\} \right).$ This is so because $\mathcal{P}$ is invariant and moreover
    \[
    \dot x|_{x=0}=(c+h)y,\quad\mbox{and}\quad  \dot y|_{y=0}=(2b-d)x^2.
    \]
    Call $\mathcal{V}$ this connected component which, of course, must contain a critical point of index $+1.$
    Hence in particular the new birrational change of variables
    \[
    X=\frac{x}{y-x^2},\quad Y=\frac{y}{y-x^2}
    \]
    is well defined on the region $\mathcal{V}$ where $\gamma$ lies, and its inverse
    is \[x=\dfrac{Y-1}X,\quad y=\dfrac{Y(Y-1)}{X^2}.\] It transforms the parabola $y-x^2=0$ into the straight line $Y-1=0.$
    By introducing a new time $s$ such that $ds/dt=X,$ and writing $Z'=dZ/ds=XdZ/dt=X\dot Z,$ after some computations we get that
    \begin{equation*}
        \begin{cases}
            X'=X\dot X= -cX+eY+(b-d)X^2+(2c+h)XY-eY^2,\\[0.2cm]
            Y'=X\dot Y=(Y-1)\big((2b-d)X+2(c+h)Y\big),
        \end{cases}
    \end{equation*}
    which is a quadratic system with the invariant straight line $Y-1=0,$ as we wanted to prove.
\section{Proof of Theorem \ref{th:teo2}}

For each of the cases the result will be a consequence of  Theorem~\ref{th:met} by
choosing suitable $\alpha$ and $g$ in such a way that $N_{\alpha,g}(x,y)= r w^2(x,y),$ for some $r\in\R$ and some polynomial $w.$

We skip all the computations and we simply write $P,Q,$ the algebraic curve $f=0$ and
its cofactor $k,$ a suitable  constant $\alpha$ and  function $g,$ and finally the corresponding  $N=N_{\alpha,g}.$

For some of the systems some given values of the parameters have to
be omitted. For instance, the value $a=-10/3$ in Case 4.
These values could be studied separately, but for the
sake of shortness and because for them the invariant algebraic curve
does not contain any limit cycle we do not study these particular values.

\smallskip

\noindent {\bf Case 1:} Qin system with algebraic limit cycle of degree 2, see \cite{Q}:
\begin{align*} P&= -y(ax+by+c)-(x^2+y^2-1), \quad Q=x(ax+by+c),\\ f&= x^2+y^2-1,\quad k=-2x,\quad \alpha=b/2,\quad  g=c+by,\quad
    N= abx^2.
\end{align*}
We remark that the corresponding QS has only $x^2+y^2-1=0$ as a
limit cycle when   $a\ne0,$ $c^2+4(b+1)>0$ and $a^2+b^2<c^2$ but the
proof that there are no other limit cycles works for all values of
the parameters. A similar fact also holds for all the other cases.
In fact the range of parameters for which each invariant algebraic
curve contains a limit cycle is detailed in \cite{AFL}.

\bigskip

\noindent {\bf Case 2:} Yablonskii system with algebraic limit cycle  of degree 4, see \cite{Y}:
\begin{align*} P&= -4abcx-(a+b)y+3(a+b)cx^2+4xy, \\
    \begin{split}
            Q &= (a+b)abx-4abcy+(4abc^2-3(a+b)^2/2+4ab)x^2\\&\quad+8(a+b)cxy+8y^2,\end{split}
        \\f &= 
    (y+cx^2)^2+x^2(x-a)(x-b),\,\,\,
k=4(-2abc+3c(a+b)x+4y),\\ g &= 2c(a-3b)(3a-b)(y+cx^2)-ab(a+b-4x)^2,\\
\alpha &=-1/2,\quad N=-c\big(2ab(a+b)+(2ab-3a^2-3b^2)x\big)^2.&
    \end{align*}

\bigskip

\noindent {\bf Case 3:} Filipstov system with algebraic limit cycle of degree 4, see \cite{F}:
\begin{align*}
    P &=6(1+a)x+2y-6(2+a)x^2+12xy, \\
Q &=15(1+a)y+3a(1+a)x^2-2(9+5a)xy+16y^2,\\
f &=
3(1+a)(ax^2+y)^2+2y^2(2y-3(1+a)x),\\ k &=6(5(1+a)-(8+4a)x+8y),\\ \alpha &= -1/2,\quad g=2y+(3+5a)x^2,\quad N=-\big(3(1+a)x-4y\big)^2.
\end{align*}

\bigskip

\noindent {\bf Case 4:} Chavarriga system with algebraic limit cycle of degree 4, see \cite{CLS}: \begin{align*}
    P&=5x+6x^2+4(1+a)xy+ay^2,\quad
    Q=x+2y+4xy+(2+3a)y^2,\\
f&=x^2+x^3+x^2y+2axy^2+2axy^3+a^2y^4, \quad a\ne -10/3,\\
k&=2\big(5+9x+(5+6a)y\big),\quad \alpha=-\frac{7+3a}{3(10+3a)},\\
g&=-5+(21+9a)x-(35+15a)y,\\ N&=\frac{2(7+3a)}{3(10+3a)}\big(5+9x+(5+6a)y\big)^2.
\end{align*}

\bigskip

\noindent {\bf Case 5:} Chavarriga, Llibre and Sorolla system with algebraic limit cycle of degree 4, see \cite{CLS}:
\begin{align*}
    P&=2(1+2x-2ax^2+6xy),\quad
    Q=8-3a-14ax-2axy-8y^2,\\
f&=1/4+x-x^2+ax^3+xy+x^2y^2,\quad k=4(2-3ax+2y),\\
 \alpha&=1/3,\quad g=-5+3ax/2+y,\quad N=-4(2-3ax+2y)^2/3.
    \end{align*}

\bigskip

\noindent {\bf Case 6:} Christopher, Llibre and \'Swirszcz system with algebraic limit cycle of degree 5, see \cite{CJS}:
\begin{align*}
    P&=28x+2 (16-a^2) (a+12) x^2+6 (3 a-4) xy-\frac{12 }{a+4}y^2, \quad a\ne-4,\\
    Q&=2 (16-a^2)x+8 y+(16-a^2) (a+12)  xy+2 (5 a-12) y^2,\\
\begin{split}
f&=\,x^2+(16-a^2)  x^3+(a-2) x^2y-\frac{2}{a+4}x y^2-\frac{1}{4}   (4-a) (a+12)x^2 y^2\\
&\quad +\frac{8-a}{a+4}x y^3+\frac{1}{(a+4)^2}y^4+ \frac{a+12 }{a+4}xy^4-\frac{6 }{(a+4)^2}y^5,
\end{split} \\
k&=56 + 6 (16- a^2) (a+12) x + 4(13 a-24 ) y,\,
\alpha=-\frac{3+4a}{15(3+a)}, a\ne-3,\\ g&=28+(-144-192a+9a^2+12a^3)x+(42+56a)y,\\
N&=-\frac{2(3+4a)}{15(3+a)}\big(28+(576+48a-36a^2-3a^3)x+(-48+26a)y\big)^2.
\end{align*}

\bigskip

\noindent {\bf Case 7:} Christopher, Llibre and \'Swirszcz system with algebraic limit cycle of degree 6, see \cite{CJS}:
\begin{align*}
    P&=28a ( a - 30 )  x + y + 1 6 8 a^2 x^2 + 3 x y,\\
    \begin{split}
 Q&=16a(a-30)\big(14a(a-30)x+5y+84a^2x^2\big)+24a (17a-6)xy+6y^2,
\end{split}\\
    \begin{split}
f&= 48a^3(a - 30)^4x^2+ 24a^2(a - 30)^3xy+ 3a(a - 30)^2  y^2\\
& \quad+ 64a^3(a - 30)^3(9a - 4)x^3+ 24a^2(a - 30)^2(9a - 4)x^2y \\
& \quad+ 18a(a - 30)( a-2) xy^2 -7y^3 + 576a^3(a - 30)^2( a-2)^2x^4 \\
&\quad+ 144a^2(a - 30)(a - 2)^2x^3y + 27a(a - 2)^2 x^2y^2\\
&\quad- 3456a^3(a - 30)(a-2)^2( 2a+3)x^5- 432a^2(a - 2)^2( 2a+3)x^4y  \\
&\quad+ 3456a^3(a - 2)^2( a+12)( 2a+3)x^6,
\end{split}\\
k&=168a (a-30)   + 1008 a^2 x + 18 y,\quad \alpha=-1/3,\\
\begin{split}
g&=-16a(a-30)^2-24a(a-30)(7a-30)x\\&\quad-72a(360-78a+5a^2)x^2+3(30+a)y,\end{split}\\
N&= 896a^2(a-30)(a-30+6ax)^2.
\end{align*}
We remark that there is a misprint when this system in written in~\cite{AFL}. There the coefficient of $y$ in $Q$ is typed as $516a(a-30)$  instead of the correct value  $80a(a-30).$

\bigskip

\noindent {\bf Case 8:} Alberich-Carrami\~{n}ana, Ferragut and Llibre
system with algebraic limit cycle of degree 5, see \cite{AFL}:
\begin{align*}
    P&=-8x+\frac a 2(a-16) y- (5a-64) x^2   +\frac a 8 (a^2-256) xy,\\
    Q&= - 28 y+\frac{24}{a} x^2-3 (3 a-32) x y +\frac a 4 (a^2-256) y^2,\\
\begin{split}
f&=\, a  y^2-4x^2y+\frac a 2  (a-12) x y^2 -\frac{a^2}4 (a-16)y^3   +  \frac{4}{a}x^4\\
&\quad+(24-a)x^3 y+ \frac{a}{16} (a^2-256)x^2y^2 -\frac{24}{a} x^5 +(a+16) x^4y,
\end{split}\\
k&=-56 -2(13a-152) x +\frac{3a}4 (a^2-256) y,\quad \alpha=\frac{26-4a}{15(a-2)},\, a\ne2,\\
g&= 112+56(2a-13)x+3a(2a-13)(a-16)y,\\
N&=\frac{2a-13}{60(a-2)}\big(-224+(1216-104a)x+a(3a^2-768)y\big)^2.
\end{align*}

\section{Proof of Theorem \ref{th:teo3}}	

This section is devoted to prove Theorem~\ref{th:teo3}, Corollary \ref{cor} and other related results. When we say that the sign of a  number, say $h,$ gives the stability of a limit cycle or a critical point we mean that when the sign of $h$ is positive (resp. negative) this object is a repellor (resp. an attractor).

\begin{proof}[Proof of Theorem \ref{th:teo3}]
	From the classical results on QS (\cite{Cop1966,Ye1986}) and the results of \cite{P} we know that each limit cycle of a QS surrounds exactly one critical point, that must be of focus type. Moreover, there are at most two nests of concentric limit cycles and in this case one of the nests is formed by a single limit cycle. In other words,  the configurations of limit cycles for a QS are either $L$ limit cycles surrounding a  focus, or  $L$ limit cycles surrounding a focus and one more limit cycle surrounding another focus. Moreover, from \cite{Bam} and for a given QS, $L$ is finite.

	Let $f_i(x,y)=0, i=1,2$ be the two different generalized invariant  curves, and let $k_i(x,y)=a_i+b_ix+c_i y, i=1,2$ be their respective cofactors. Let $\operatorname{div}(\mathcal{X}(x,y))=a+bx+cy$  be the divergence of the vector field associated to the QS.
	It is well known (\cite{Ye1986}) that the characteristic exponent of a limit cycle $\gamma$ is 
	\[
	h=\int_0^T \operatorname{div}(\mathcal{X})(x(t),y(t))\,{\rm d} t= \int_0^T \big(a+bx(t)+cy(t)  \big)\,{\rm d} t,
	\]
	where 	$(x(t),y(t))$ is the time parameterization of $\gamma$ and $T$ is its period. Recall that if $h\ne0$ then $\gamma$ is hyperbolic and its sign gives its stability.
	
	For convenience we introduce the sets  $\mathcal{Z}_i=\{f_i(x,y)=0\}, i=1,2,$ $\mathcal{Z}_{1,2}=\mathcal{Z}_1\cup\mathcal{Z}_2$ and  also  two real numbers $X$ and $Y,$ as
	\[
	X=\int_0^T x(t)\,{\rm d} t, \quad  Y=\int_0^T y(t)\,{\rm d} t \quad\mbox{and}\quad T=\int_0^T {\rm d} t.
	\]
	
	Let us study $h$ when  $\gamma$ is not included in  $\mathcal{Z}_{1,2}.$ In this case, by integrating the equalities $\dot f_i(x,y)/f_i(x,y)=k_i(x,y)$ over $\gamma$ it holds that
	\[
	\int_0^T k_i(x(t),y(t))\,{\rm d} t= \int_0^T \big(a_i+b_ix(t)+c_iy(t)  \big)\,{\rm d} t=0, \quad i=1,2.
	\] 
	By joining all the above equalities, for $\gamma\cap \mathcal{Z}_{1,2}=\emptyset,$ it holds that
	\[
	A\left(\begin{matrix}T\\X\\Y
	\end{matrix}\right)=\left(\begin{matrix}h\\0\\0
	\end{matrix}\right),\quad \mbox{where}\quad A=\left(\begin{matrix}a&b&c\\a_1&b_1&c_1\\a_2&b_2&c_2
	\end{matrix}\right).
	\]
	When $(b_1c_2-b_2c_1)\det(A)\ne0$ then \[h=\frac{\det(A)}{b_1c_2-b_2c_1}T\ne0\] and as a consequence all periodic orbits not contained in $\mathcal{Z}_{1,2}$ are hyperbolic limit cycles and with the same stability. In particular, only one limit can exist in the region delimited by two consecutive limit cycles contained in $\mathcal{Z}_{1,2},$ or between the critical point and the first limit cycle, or surrounding the last limit cycle of the nest. In any case, the maximum number of limit cycles that surround one focus is 1, while surrounding the other one the maximum number is $M$ contained in $\mathcal{Z}_{1,2}$ and $M+1$ not in $\mathcal{Z}_{1,2}.$ This makes a total of $2M+2$ limit cycles, as we wanted to see.
	
	To end the proof we need to study the cases whether $\det(A)(b_1c_2-b_2c_1)=0.$ In this situation we will prove that the QS does not have limit cycles outside $\mathcal{Z}_{1,2}.$   
	
	Assume first that $\operatorname{det}(A)=0.$ Then, there exist real numbers $u,v$ and $w$ such that $u k_1(x,y)+v k_2(x,y)+ w \operatorname{div}(\mathcal{X})\equiv0.$ By using the well known formula
	\begin{multline*}
		\operatorname{div}\Big(|f_1(x,y)|^p|f_2(x,y)|^q \mathcal{X}\Big) \\=|f_1(x,y)|^p|f_2(x,y)|^q\Big(p k_1(x,y)+q k_2(x,y)+ \operatorname{div}(\mathcal{X})\Big)
	\end{multline*}
	and by taking $p=u/w$ and $q=v/w,$ when $w\ne0,$ we get that the divergence is identically zero and the QS can not have limit cycles outside $\mathcal{Z}_{1,2}.$ When $w=0$ we can apply Remark \ref{re:1} with $f_1,f_2,$ $\alpha_1=u,$ $\alpha_2=v$ and $g(x,y)\equiv1$ to prove the non existence of limit cycles outside $\mathcal{Z}_{1,2}$ because $N_{\alpha_1,\alpha_2,g}(x,y)\equiv0$  and $|f_1(x,y)|^p|f_2(x,y)|^q$ is not identically constant.
	
	Finally, when $b_1c_2-b_2c_1=0,$ notice that $c_2k_1(x,y)-c_1k_2(x,y)=a_1c_2-c_1a_2.$ Hence we can apply again Remark \ref{re:1} with $f_1,f_2$ and now $\alpha_1=c_2,$ $\alpha_2=-c_1$ and $g(x,y)\equiv1,$ obtaining that $N_{\alpha_1,\alpha_2,g}(x,y)\equiv a_1c_2-c_1a_2.$ Therefore, the non existence of limit cycles outside $\mathcal{Z}_{1,2}$ follows again.	
\end{proof}

\begin{proof}[Proof of Corollary \ref{cor:2men}] In this proof we will use the same notations that in the proof of Theorem \ref{th:teo3}. Let us show first that the new hypothesis force the QS to have at most one critical point outside the generalized invariant curves. We know that $\dot f_i(x,y)=k_i(x,y)f_i(x,y),$ $i=1,2.$ Let $(\bar x,\bar y)$ be a critical point not contained in these curves. Since   $\dot f_i(x,y) |_{(x,y)=(\bar x,\bar y)}=0$  and $f_i(\bar x,\bar y)\ne0,$ we get that $k_i(\bar x,\bar y)=0,$ $i=1,2.$ Writing $k_i(x,y)=a_i+b_ix+c_i y,$ $i=1,2,$ we obtain that $(\bar x,\bar y)$ must be a solution of the lineal system
	\begin{equation}\label{eq:pc}
a_1+b_1\bar x+c_1\bar y=0,\quad 	a_2+b_2\bar x+c_2\bar y=0.
	\end{equation}
Then, either the system does not have solution,  or it has exactly one solution, or it has infinitely many. In 	the first two situations we are done. In the third one, as in the proof of Theorem \ref{th:teo3}, we can apply Theorem \ref{th:met} to prove that the QS does not have limit cycles because both cofactors  are proportional.

Since the QS does not have isolated critical points in the generalized invariant curves  we have already proved that all the limit cycles are nested and surround $(\bar x,\bar y).$ Hence, arguing as in the proof of Theorem \ref{th:teo3}, the maximum number of limit cycles is $2M+1.$ 

To decrease by one this new upper bound we will prove that between the critical point $(\bar x,\bar y)$ and the first limit cycle of the nest belonging to $\mathcal{Z}_1\cup\mathcal{Z}_2$ the QS does not have limit cycles. Assume that such a  $\gamma$ do exist. We will prove that $\gamma$ and  the critical point $(\bar x,\bar y)$ have the same  stability giving rise to a contradiction.  By the proof of Theorem~\ref{th:teo3} we know that we can restrict our attention to the case $(b_1c_2-b_2c_1)\det(A)\ne0$ and that the stability of this $\gamma$ is given by the sign of $\det(A)T/(b_1c_2-b_2c_1).$ To study the stability of the focus point $(\bar x,\bar y)$ it suffices to know the sign of $$d=\operatorname{div}(\mathcal{X})(\bar x,\bar y)=a+b\bar x+c\bar y.$$ By using this equation and \eqref{eq:pc} we get that
\[
A\left(\begin{matrix}1\\\bar x\\\bar y
\end{matrix}\right)=\left(\begin{matrix}d\\0\\0
\end{matrix}\right)
\]
and as a consequence $d=\det(A)/(b_1c_2-b_2c_1),$ proving the desired result. 
\end{proof}

\begin{proof}[Proof of Corollary \ref{cor}] By Theorem \ref{th:teo3} it suffices to prove that $M=[m_1/2]+[m_2/2].$ To prove this equality we will show that when $f(x,y)=0$ is an algebraic invariant curve of degree $m$ of a QS then its maximum number of nested ovals is $[m/2].$ This is a simple consequence of the fact that a straight line passing by a point surrounded by all these ovals cuts the curve $f$ at most at $m$ points and that each oval gives rises at least to two of these cuts.  In fact, exactly to two cuts because it is known that periodic orbits of QS are convex. The upper bound of $2([m_1/2]+[m_2/2])$ limit cycles when the invariant algebraic curves do not contain real isolated  points follows from Corollary \ref{cor:2men}.
\end{proof}	

 In \cite{CGrau} it is proved that there are QS with invariant algebraic curves of arbitrarily high degree and not being Darboux integrable. Their example is $\dot x=1,$ $\dot y=2m+2xy+y^2$ and the curve has degree $m+1$. For other similar examples see for instance \cite{CL2,MO}.  Nevertheless, under generic conditions on the QS, the higher degree of the invariant algebraic invariant curves is 4, see \cite{Car}. In fact, in that paper the author proves that if a polynomial system of degree $n$ does not have dicritic critical points (see the paper for a definition)  then the maximum degree of any invariant algebraic curve is $n+2.$  By using this result and Corollary \ref{cor} we obtain a new result.
	
	\begin{cor}\label{cor-nou}
		Consider a quadratic system with two different invariant  irreducible algebraic curves and without dicritic critical points. Then its maximum number of limit cycles  is ten. Moreover, if the two algebraic invariant curves do not have real isolated points, then this upper bound	reduces to eight.
	\end{cor}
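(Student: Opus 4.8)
The plan is to combine the external degree bound from \cite{Car} with the estimate of Corollary \ref{cor}; the statement is then an immediate specialization. First I would observe that for a quadratic system the degree of the associated vector field is $n=2$, so the cited result of \cite{Car} --- namely, that in the absence of dicritic critical points every invariant algebraic curve of a polynomial system of degree $n$ has degree at most $n+2$ --- forces both invariant irreducible algebraic curves to have degree $m_1,m_2\le 4$.

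Next I would feed this into Corollary \ref{cor}, whose upper bound is $2([m_1/2]+[m_2/2])+2$. Since $[m/2]$ is nondecreasing in $m$, this bound is nondecreasing in each of $m_1$ and $m_2$, so over the admissible range $m_1,m_2\in\{1,2,3,4\}$ its maximum is attained at $m_1=m_2=4$. There $[4/2]=2$, and the bound equals $2(2+2)+2=10$, which proves the first assertion. For the second assertion, when neither curve has real isolated points the sharper bound $2([m_1/2]+[m_2/2])$ of Corollary \ref{cor} is in force; evaluated again at $m_1=m_2=4$ it gives $2(2+2)=8$.

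There is no real obstacle here: the corollary is a direct consequence of plugging the degree cap $m_i\le 4$ into Corollary \ref{cor}, and the only substantive ingredient is the degree bound of \cite{Car}. The single point worth flagging explicitly is the monotonicity of the integer-part expression, which guarantees that the extremal degree $4$ is indeed the worst case rather than an intermediate value; since $[m/2]$ increases with $m$, this is clear.
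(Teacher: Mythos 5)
Your proposal is correct and follows essentially the same route as the paper: apply the degree bound $n+2=4$ from \cite{Car} and then substitute $m_1=m_2=4$ into the two bounds of Corollary \ref{cor}. Your explicit remark on the monotonicity of $[m/2]$, justifying that degree $4$ is the worst case, is a minor refinement the paper leaves implicit.
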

	
	\begin{proof} From the results of  \cite{Car} we know that the maximum degrees of the two invariant algebraic curves is four. Then, taking $m_1=m_2=4$ 
		in Corollary \ref{cor} the result follows.
	\end{proof}

Similarly, if we consider the classes of QS studied in \cite{LV1,LV2}, which have at most one algebraic limit cycle, we have the following result because we can apply Theorem \ref{th:teo3} and Corollary \ref{cor:2men} with $M=1$. 

	\begin{cor}\label{cor-nou2}
	Consider a quadratic system with two different invariant  irreducible algebraic curves and satisfying the hypotheses of \cite{LV1,LV2}. Then its maximum number of limit cycles  is four. Moreover, if the two algebraic invariant curves do not have real isolated points, then this upper bound	reduces to two.
\end{cor}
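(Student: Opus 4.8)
The plan is to deduce this corollary directly from Theorem~\ref{th:teo3} and Corollary~\ref{cor:2men}, as the preceding paragraph announces, by bounding the parameter $M$ under the present hypotheses. Recall that $M$ denotes the number of limit cycles contained in the two invariant curves, that is, in $\mathcal{Z}_{1,2}=\mathcal{Z}_1\cup\mathcal{Z}_2$. Since $f_1=0$ and $f_2=0$ are invariant algebraic curves, any limit cycle lying in $\mathcal{Z}_{1,2}$ is by definition an algebraic limit cycle of the system; hence $M$ is bounded above by the total number of algebraic limit cycles that the system possesses.

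First I would invoke the main results of \cite{LV1,LV2}: under the hypotheses imposed there, a quadratic system has at most one algebraic limit cycle. Combined with the observation above this yields $M\le 1$. Because the upper bound $2M+2$ supplied by Theorem~\ref{th:teo3} is increasing in $M$, the extremal case is $M=1$, and substituting gives at most $2\cdot 1+2=4$ limit cycles, which is the first assertion. For the sharpened statement I would additionally assume that neither curve has real isolated points and apply Corollary~\ref{cor:2men} in place of Theorem~\ref{th:teo3}; its bound $2M$ then reads $2\cdot 1=2$, giving the second assertion.

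The only points requiring care are, first, verifying that the standing hypotheses of Theorem~\ref{th:teo3} are in force, most notably that $f_1=0$ and $f_2=0$ are \emph{different} in the prescribed sense, i.e.\ that there are no $\alpha,\beta\in\R$ with $|f_1|=\alpha|f_2|^\beta$; this is exactly the assumption of \emph{two different} invariant irreducible algebraic curves. Second, one must confirm that the notion of algebraic limit cycle controlled by \cite{LV1,LV2} is the same one entering the count $M$, which is immediate since both refer to limit cycles contained in invariant algebraic curves. There is no genuine obstacle here: the entire content of the proof is the reduction $M\le 1$ furnished by \cite{LV1,LV2}, after which the corollary follows by the single substitution $M=1$ into the two quoted bounds, with no further computation.
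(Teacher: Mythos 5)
Your proposal is correct and follows exactly the paper's own route: the results of \cite{LV1,LV2} give at most one algebraic limit cycle, hence $M\le 1$, and substituting into the bounds $2M+2$ of Theorem~\ref{th:teo3} and $2M$ of Corollary~\ref{cor:2men} yields four and two, respectively. The extra care you take (checking that limit cycles in $\mathcal{Z}_{1,2}$ are precisely algebraic limit cycles, and that the ``different curves'' hypothesis is in force) is a sound elaboration of what the paper leaves implicit.
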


Recall that from the proof of Theorem~\ref{th:teo3} we know that under its hypotheses  all the limit cycles of the QS that are not in generalized invariant curves are hyperbolic and have the same stability. In the following result we prove that something similar  holds for all limit cycles contained in $\mathcal{Z}_{1}$ and also for the ones contained in~$\mathcal{Z}_{2}.$ Moroever, when these three stabilities coincide the upper bound for the number of limit cycles drastically decreases.

\begin{cor}\label{co:2cl} Assume that the hypotheses of Theorem \ref{th:teo3} hold. Let $k_i(x,y)=a_i+b_ix+c_i y$  be the cofactors of the generalized invariant  curves of the QS, $f_i(x,y)=0,$ $i=1,2$ and let  $\operatorname{div}(\mathcal{X}(x,y))=a+bx+cy$ be its divergence. Assume that $\Delta_{1,2}\big(\Delta_{1,2}-\Delta_2\big)\big(\Delta_{1,2}+\Delta_1)\operatorname{det}(A)\ne0,$
where
\[
\Delta_{1,2}=b_1c_2-b_2c_1\quad\mbox{and}\quad \Delta_i=bc_i-cb_i,\, i=1,2.
\]	
Then it holds that
\begin{enumerate}[(i)]
	\item  The stability of the limit cycles contained in $\{f_1(x,y)=0\}$ is given by the sign of $(\Delta_{1,2}-\Delta_2)\operatorname{det}(A).$ 
	\item  The stability of the limit cycles contained in $\{f_2(x,y)=0\}$ is given by the sign of $(\Delta_{1,2}+\Delta_1)\operatorname{det}(A).$ 
	\item  The stability of the limit cycles not contained in $\{f_1(x,y)=0\}\cup\{f_2(x,y)=0\}$ is given by the sign of $\Delta_{1,2}\operatorname{det}(A).$ 
\end{enumerate}
Moreover, if the three quantities $\Delta_{1,2},$ $\Delta_{1,2}-\Delta_2$ and $\Delta_{1,2}+\Delta_1$ have the same sign, then the maximum number of limit cycles of the QS is two. Furthermore, in this last situation, if the two generalized invariant curves do not contain isolated real points then the QS does not have limit cycles.
\end{cor}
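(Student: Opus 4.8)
The plan is to reduce everything to a single computation of characteristic exponents, splitting the limit cycles into the three families appearing in (i)--(iii). The one new ingredient compared with the proof of Theorem~\ref{th:teo3} is a formula for the characteristic exponent of a limit cycle $\gamma$ that \emph{is} contained in a generalized invariant curve. Precisely, I would first establish the auxiliary fact that if $\gamma\subset\{f=0\}$ is a limit cycle of period $T$ lying in a generalized invariant curve with cofactor $k$, then
\[
h=\int_0^T \operatorname{div}(\mathcal{X})(x(t),y(t))\,{\rm d}t=\int_0^T k(x(t),y(t))\,{\rm d}t .
\]
To see this I would use $f$ itself as the coordinate on a transversal section through a point of $\gamma$; since $\dot f=kf$, to first order a nearby trajectory satisfies $f(T)\approx f(0)\exp\big(\int_0^T k\,{\rm d}t\big)$, so the multiplier of the Poincaré map equals $\exp\big(\int_0^T k\,{\rm d}t\big)$. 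As this multiplier also equals $\exp\big(\int_0^T\operatorname{div}(\mathcal{X})\,{\rm d}t\big)$, the two integrals coincide. (Equivalently, one integrates the normal variational equation, using that $|\nabla f|$ is periodic along $\gamma$.) I expect this to be the main obstacle, since it needs the regularity of $f$ along $\gamma$ and the fact that the return time of nearby orbits is only $T+o(1)$; both affect only higher-order terms and so do not change the multiplier.

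Granting this, (i)--(iii) become linear algebra. Writing $T=\int_0^T{\rm d}t$, $X=\int_0^T x\,{\rm d}t$, $Y=\int_0^T y\,{\rm d}t$ and regarding $\operatorname{div}(\mathcal{X})$, $k_1$, $k_2$ as the rows of $A$ acting on the column vector $(T,X,Y)$, I would argue as follows. For $\gamma$ not contained in $\mathcal{Z}_{1,2}$, statement (iii) is exactly what is proved in Theorem~\ref{th:teo3}: $h=\det(A)\,T/\Delta_{1,2}$, so the sign of $h$ is that of $\Delta_{1,2}\det(A)$. For $\gamma\subset\mathcal{Z}_1$ (a limit cycle cannot lie in both different curves, so $f_2$ does not vanish on $\gamma$), integrating $\dot f_2/f_2=k_2$ gives $k_2\cdot(T,X,Y)=0$, while the auxiliary formula applied to $f_1$ gives $\operatorname{div}(\mathcal{X})\cdot(T,X,Y)=k_1\cdot(T,X,Y)=h$, i.e. the extra relation $(\operatorname{div}(\mathcal{X})-k_1)\cdot(T,X,Y)=0$. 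Thus $(T,X,Y)$ is proportional to $k_2\times(\operatorname{div}(\mathcal{X})-k_1)$, and computing $h=\operatorname{div}(\mathcal{X})\cdot(T,X,Y)$ by Cramer's rule yields $h=\det(A)\,T/(\Delta_{1,2}-\Delta_2)$; since $T>0$ this proves (i). Statement (ii) is identical after exchanging the roles of $f_1$ and $f_2$, a short sign bookkeeping producing the factor $\Delta_{1,2}+\Delta_1$. The hypothesis $\Delta_{1,2}(\Delta_{1,2}-\Delta_2)(\Delta_{1,2}+\Delta_1)\det(A)\neq0$ guarantees $h\neq0$ in all three cases, so every limit cycle is hyperbolic.

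For the final two assertions I would combine these signs with the topology of nests. If $\Delta_{1,2}$, $\Delta_{1,2}-\Delta_2$ and $\Delta_{1,2}+\Delta_1$ share the same sign, then by (i)--(iii) the characteristic exponent of every limit cycle --- whether it lies in $\mathcal{Z}_1$, in $\mathcal{Z}_2$, or in neither --- has one and the same sign, so all limit cycles have the same stability. Since two consecutive limit cycles of a nest are hyperbolic and necessarily have opposite stabilities, each nest contains at most one limit cycle; using, as in Theorem~\ref{th:teo3}, that a QS has at most two nests one of which reduces to a single cycle, the total is at most two. Finally, if in addition the curves have no isolated real points, Corollary~\ref{cor:2men} shows that all limit cycles are nested around the unique focus $(\bar x,\bar y)$ off the curves, whose stability is the sign of $d=\operatorname{div}(\mathcal{X})(\bar x,\bar y)=\det(A)/\Delta_{1,2}$, i.e. the sign of $\Delta_{1,2}\det(A)$. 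This coincides with the common stability of all the limit cycles; but the innermost cycle surrounding a hyperbolic focus must have the opposite stability to the focus, a contradiction. Hence no limit cycle can exist.
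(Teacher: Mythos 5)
Your proposal is correct and takes essentially the same approach as the paper: the identity $\int_0^T\operatorname{div}(\mathcal{X})\,{\rm d}t=\int_0^T k_i\,{\rm d}t$ for a cycle lying in $\{f_i=0\}$, the linear system $A(T,X,Y)^{\top}=(h,h,0)^{\top}$ (resp. $(h,0,h)^{\top}$) solved by Cramer's rule, the same-stability argument giving at most one cycle per nest and hence at most two in total, and Corollary \ref{cor:2men} plus the innermost-cycle/focus stability contradiction for the final non-existence claim. The only real difference is that the paper simply cites \cite{GG} for the key identity relating the characteristic exponent to the integral of the cofactor, whereas you sketch a direct transversal/Poincar\'e-map proof of it, which (as you note) requires the non-degeneracy of $f_i$ along the cycle -- the same hypothesis under which the cited result applies.
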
 

\begin{proof} In this proof we will use the same notations that in the proof of Theorem \ref{th:teo3}. Recall that if a limit cycle does not intersect $\mathcal{Z}_{1,2}$ then its characteristic exponent is $h=\operatorname{det}(A)T/\Delta_{1,2},$  that is not zero under our hypotheses.

Let $\gamma$ be a limit cycle contained in $\mathcal{Z}_{1}.$ By using the results of \cite{GG} we know  that 
\[
h=\int_0^T \operatorname{div}(\mathcal{X})(x(t),y(t))\,{\rm d} t=\int_0^T k_1(x(t),y(t))\,{\rm d} t.
\]
Equivalently
	\[
h=\int_0^T \big(a+bx(t)+cy(t)  \big)\,{\rm d} t=\int_0^T \big(a_1+b_1x(t)+c_1y(t)  \big)\,{\rm d} t.
\]
The above equalities write as $h=aT+bX+cY=a_1T+b_1X+c_1Y.$ Moreover since $\gamma\cap\mathcal{Z}_2=\emptyset,$ $a_2T+b_2X+c_2Y=0.$  In short,  for $\gamma\subset\mathcal{Z}_{1}$ it holds that
\[
A\left(\begin{matrix}T\\X\\Y
\end{matrix}\right)=\left(\begin{matrix}h\\h\\0
\end{matrix}\right).
\] 
 Then,
\[
h= \frac{\det(A)}{\Delta_{1,2}-\Delta_2}T\ne0.
\]
Similarly, if $\gamma\subset\mathcal{Z}_2,$ 
\[
h= \frac{\det(A)}{\Delta_{1,2}+\Delta_1}T\ne0.
\]
Hence, under our hypotheses, all possible limit cycles share stability and as a consequence, at most one limit cycle can surround each of the critical points. In short, 2 is the maximum number of limit cycles, as we wanted to prove.

The non existence of limit cycles when the generalized invariant curves do not contain real isolated  points follows from Corollary \ref{cor:2men}.
\end{proof}

 We remark that the results of Theorem \ref{th:teo3} and its corollaries  could be extended to polynomial vector fields of degree $n$ having enough different generalized invariant curves (in fact $n(n+1)/2 -1$ curves). It can be proved that all the limit cycles outside of these curves have the same stability and  an upper bound of how many of them possesses the system only depends on the number of limit cycles contained in the given curves and on  their distribution. Also, generically, the stabilities of the limit cycles contained in the generalized invariant curves can be explicitly obtained. Notice also that the possible configurations of limit cycles are much more complicated than in the QS case.

\section{More results on quadratic systems}\label{se:final}

In this section we collect a miscellany of results about QS where the  common point is that their proofs are based on the method introduced in Theorem \ref{th:met}.

\subsection{Quadratic systems with an invariant hyperbola}

As we have already explained, QS with an invariant hyperbola do not have limit cycles, see \cite{Che}. We include a simple proof based on our approach. We consider that the hyperbola is $f(x,y)=x^2-y^2-1=0,$ and we take the normal form considered in \cite{G},
 \begin{equation}\label{eq:hyper}
	\begin{cases}
		\dot x=y(a+bx+cy)+u(x^2-y^2-1)=P(x,y),\\
		\dot y=x(a+bx+cy)+v(x^2-y^2-1)=Q(x,y).
	\end{cases}
\end{equation}
Then its cofactor is $2(ux-vy).$ 

\begin{prop}\label{th:hyper}
Quadratic systems with an invariant hyperbola do not  have limit cycles.
\end{prop}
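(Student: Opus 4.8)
The plan is to apply Theorem~\ref{th:met} directly to the normal form \eqref{eq:hyper}, with the goal of finding an explicit $\alpha$ and polynomial $g$ so that $N_{\alpha,g}$ does not change sign while vanishing only on a non-invariant set. Here $f(x,y)=x^2-y^2-1$ and $k(x,y)=2(ux-vy)$. The guiding heuristic stated in the paper is to introduce parameters in $g$ and try to force $N_{\alpha,g}=r\,w^2$ for some constant $r$ and polynomial $w$, or else to force $N$ to depend on a single variable with only double roots.

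\medskip

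First I would note the symmetry of the normal form: setting $g\equiv1$ gives $N_{\alpha,1}=\alpha k=2\alpha(ux-vy)$, which is linear and changes sign, so this trivial choice fails unless $u=v=0$. When $u=v=0$ the cofactor is identically zero and the divergence of $\mathcal{X}$ is $a+2bx+2cy$ plus the contribution of the quadratic terms; in that degenerate case the hyperbola is a locus of critical points or a first integral appears, and one treats it separately. So the main work is the generic case $(u,v)\ne(0,0)$. Here I would take $g$ to be a low-degree polynomial with undetermined coefficients (a constant plus linear, and possibly the single scaling $\alpha$), compute
\[
N_{\alpha,g}=\alpha k\, g+\frac{\partial g}{\partial x}P+\frac{\partial g}{\partial y}Q,
\]
and impose that the result be a perfect square (up to a constant factor). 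Because $P$ and $Q$ share the common factor $(a+bx+cy)$ in their linear-bracket part and the common factor $(x^2-y^2-1)$ in their remaining part, there is good reason to expect that a suitable $g$ linear in $x,y$ will make the cross terms collapse into a single squared linear form, exactly as in the eight cases of Theorem~\ref{th:teo2}.

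\medskip

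The step I expect to be the main obstacle is the elimination itself: one must choose the coefficients of $g$ and the value of $\alpha$ so that the quadratic polynomial $N_{\alpha,g}$ has zero discriminant as a binary quadratic form (equivalently, factors as $r\,w^2$), and one must check this is achievable for \emph{all} parameter values $(a,b,c,u,v)$ simultaneously, rather than on a subvariety. I anticipate that the natural guess $g=$ (linear form in the cofactor variables $ux-vy$ and the bracket $a+bx+cy$), together with a value of $\alpha$ determined by matching leading coefficients, forces the non-square cross terms to vanish identically. Once $N_{\alpha,g}=r\,w^2$ is established, the hypotheses of Theorem~\ref{th:met} hold provided $\{w=0\}$ is not flow-invariant, which one verifies by checking that $w$ is not a constant multiple of a factor of $f$ and is not itself a solution of the invariance identity \eqref{eq:iac}; this is immediate since $w$ is linear while the hyperbola is quadratic. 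Then Theorem~\ref{th:met} yields that the only possible limit cycles lie in $\{f=0\}$, and since a branch of a hyperbola is unbounded it cannot itself be a limit cycle, giving the non-existence conclusion. The residual degenerate parameter configurations (where the putative $\alpha$ has a vanishing denominator, or $w$ degenerates) I would dispatch by hand with the same circle of ideas, typically exhibiting an explicit first integral or a Dulac function there.
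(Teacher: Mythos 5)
Your generic-case plan is, in substance, the paper's actual proof: for the normal form \eqref{eq:hyper} the paper takes
\[
\alpha=\frac{uc+vb}{2(v^2-u^2)},\qquad g(x,y)=a^2(v^2-u^2)^2+a(v^2-u^2)(uc+vb)(vx-uy),
\]
which yields $N_{\alpha,g}(x,y)=a(uc+vb)(ub+vc)(ux-vy)^2$, a one-signed multiple of a square, and Theorem~\ref{th:met} applies. The genuine gap is the case $u^2=v^2$, which you fold into ``residual degenerate configurations'' to be dispatched ``with the same circle of ideas, typically exhibiting an explicit first integral or a Dulac function.'' That cannot work as stated, and it is not what the paper does. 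When $v=\sigma u$ with $\sigma=\pm1$ and $u\ne0$, the line $y-\sigma x=0$ is invariant (its derivative along the flow equals $-\sigma(a+bx+cy)(y-\sigma x)$), and this line is exactly the zero set of the natural square factor $w=ux-vy=u(x-\sigma y)$. So the construction you anticipate produces an $N$ that vanishes on an \emph{invariant} curve, violating the hypothesis of Theorem~\ref{th:met}; correspondingly, the paper's $\alpha$ has $v^2-u^2$ in its denominator. For this case the paper switches to a different idea: from $yQ(x,y)-xP(x,y)=\sigma u\,(y-\sigma x)(x^2-y^2-1)$, every critical point lies on one of the two invariant curves (the line or the hyperbola), both unbounded, so no periodic orbit can surround a critical point and hence none exists. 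Nothing in your plan supplies this step.

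The same case exposes an incorrect step in your invariance check: you claim that non-invariance of $\{w=0\}$ is ``immediate since $w$ is linear while the hyperbola is quadratic.'' Non-invariance is a property relative to the flow, not relative to $f$: straight lines can perfectly well be invariant, and here, precisely in the problematic configuration, the zero line of $w$ \emph{is} invariant. One must actually check that $uP-vQ$ does not vanish identically on $\{ux-vy=0\}$, which is where the restriction $u^2\ne v^2$ enters. Two further points your plan leaves open: even for $u^2\ne v^2$, the displayed $N$ vanishes identically when $a(uc+vb)(ub+vc)=0$, and there one must invoke the first-integral (``moreover'') clause of Theorem~\ref{th:met} rather than the sign condition; and your proposal never actually produces $g$ and $\alpha$, so what you have is a correct strategy for the generic stratum of parameters rather than a proof of the proposition, whose content lies in the explicit elimination plus the degenerate cases.
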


\begin{proof}
	We use the same approach and notation that in the proof of Theorem \ref{th:teo2}. It is not restrictive to consider that the QS is written as in \eqref{eq:hyper}. When $u^2\ne v^2$ we take in Theorem \ref{th:met},
$$
g(x,y)=	a^2(v^2-u^2)^2+ a(v^2-u^2)(uc+vb)(vx-uy),\,\, \alpha=\frac{uc+vb}{2(v^2-u^2)}.
$$	
Then
\[
N(x,y)=a(uc+vb)(ub+vc)(ux-vy)^2
\]
and the result follows for this case.

When $u^2=v^2=0,$ trivially the QS does not have limit cycles. Finally, when $v=\sigma u,$ with $\sigma\in\{+1,-1\},$ it holds that the straight line $f_2(x,y)=y-\sigma x=0$ is also invariant. Observe that the critical points of the QS must satisfy
\[
0=yQ(x,y)-xP(x,y)=\sigma u(y-\sigma x)(x^2-y^2-1).
\]
Hence all the critical points lay on invariant algebraic curves, and since any periodic orbit $\gamma$ must surround one of these points, such a $\gamma$ can not exist by the theorem of uniqueness of solutions for these differential systems.
\end{proof}

\subsection{Lotka-Volterra QS}\label{se:lv} From the classical Bendixson-Dulac theorem, Bautin already proved that QS with two non parallel invariant straight lines do not have limit cycles, see for instance  \cite{Bau} or \cite[Sec. 1.2]{GG3}. The point in that proof is to use a Dulac function of the form $|x|^\alpha|y|^\beta.$ Here we will present a different proof by using the extension of Theorem~\ref{th:met}  given in Remark~\ref{re:1}.

\begin{prop}\label{pr:lv}
	The Lotka-Volterra QS
	 \begin{equation}\label{eq:lv}
		\begin{cases}
			\dot x=x(a_0+a_1x+a_2y),\\
			\dot y=y(b_0+b_1x+b_2y)
		\end{cases}
	\end{equation}
	does not have  limit cycles.
\end{prop}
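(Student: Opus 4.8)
The plan is to apply the extension of Theorem~\ref{th:met} given in Remark~\ref{re:1}, using the two obvious invariant straight lines of the Lotka-Volterra system~\eqref{eq:lv}. Since $\dot x = x(a_0+a_1x+a_2y)$, the line $f_1(x,y)=x=0$ is invariant with cofactor $k_1(x,y)=a_0+a_1x+a_2y$; similarly $f_2(x,y)=y=0$ is invariant with cofactor $k_2(x,y)=b_0+b_1x+b_2y$. Any limit cycle of the system must surround a critical point with nonzero coordinates (this is the standard fact that a periodic orbit cannot cross an invariant line), so it suffices to rule out periodic orbits in the open quadrants, where both $x$ and $y$ have a fixed sign. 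I would therefore work on each connected component of $\R^2\setminus(\{x=0\}\cup\{y=0\})$, which is exactly the setting covered by Remark~\ref{re:1}.

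The computational heart of the argument is to choose real constants $\alpha_1,\alpha_2$ and a polynomial $g$ so that the function
\[
N_{\alpha_1,\alpha_2,g}(x,y)=\big(\alpha_1 k_1(x,y)+\alpha_2 k_2(x,y)\big)g(x,y)+\frac{\partial g}{\partial x}P+\frac{\partial g}{\partial y}Q
\]
does not change sign. The natural first attempt is the simplest one: take $g\equiv 1$, so that $N_{\alpha_1,\alpha_2,1}=\alpha_1 k_1+\alpha_2 k_2=(\alpha_1 a_0+\alpha_2 b_0)+(\alpha_1 a_1+\alpha_2 b_1)x+(\alpha_1 a_2+\alpha_2 b_2)y$. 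The idea is to impose two linear conditions on $(\alpha_1,\alpha_2)$ to kill the coefficients of $x$ and of $y$, leaving a constant that plainly does not change sign. Concretely I would solve
\[
\alpha_1 a_1+\alpha_2 b_1=0,\qquad \alpha_1 a_2+\alpha_2 b_2=0,
\]
which admits a nontrivial solution precisely when $a_1b_2-a_2b_1=0$. When this determinant does not vanish, the only solution is $\alpha_1=\alpha_2=0$ and this shortcut fails, so a genuinely nonconstant $g$ is needed; this is the expected main obstacle. In the nondegenerate case the classical choice is the Dulac-type function $v=|x|^{\alpha}|y|^{\beta}$, i.e. $g\equiv1$ together with a suitable nonzero pair $(\alpha_1,\alpha_2)=(\alpha,\beta)$, and one shows that $\alpha k_1+\beta k_2$ can be made to have a strict sign on the relevant quadrant; this is exactly the content of the Bautin/Bendixson-Dulac argument recalled just before the statement, recast in the language of Remark~\ref{re:1}.

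To organize the proof cleanly I would split into cases according to the rank of the coefficient matrix of the cofactors. If the two cofactors are proportional, then some linear combination $\alpha_1 k_1+\alpha_2 k_2$ is a nonzero constant and $N_{\alpha_1,\alpha_2,1}$ is that constant, so Remark~\ref{re:1} immediately forbids periodic orbits off the axes. In the remaining case one selects $(\alpha_1,\alpha_2)$ so that the combined cofactor has a definite sign on each open quadrant (for instance by matching it to the gradient structure of an explicit Dulac function), again concluding via Remark~\ref{re:1} that no limit cycle lies in any quadrant. Since every hypothetical limit cycle is confined to a single quadrant, this exhausts all possibilities and proves the nonexistence of limit cycles. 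The only delicate point is verifying the sign condition in the nondegenerate case, but this reduces to the standard computation underlying the Lotka-Volterra Dulac function and may be quoted from the references already cited in the text.
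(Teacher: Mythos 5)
Your setup is correct (the two invariant lines $f_1=x$, $f_2=y$, their cofactors, and the confinement of any limit cycle to an open quadrant), and so is your observation that the shortcut $g\equiv1$ can only produce a constant $N$ when $a_1b_2-a_2b_1=0$. The genuine gap is in the only case where limit cycles could actually occur, $\Delta_{1,2}:=a_1b_2-a_2b_1\ne0$: there you never construct the nonconstant $g$ that you yourself recognize is needed, and the claim you substitute for it is false. Since $P=xk_1$ and $Q=yk_2$, every critical point $(\bar x,\bar y)$ off the axes satisfies $k_1(\bar x,\bar y)=k_2(\bar x,\bar y)=0$, and any limit cycle must enclose such a point. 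Hence $\alpha k_1+\beta k_2$ vanishes at a point interior to the relevant quadrant and, being affine and not identically zero, changes sign there; it can never ``be made to have a strict sign on the relevant quadrant.'' (A concrete counterexample to your claim: the classical center case $a_1=b_2=0$, $a_0>0$, $b_1>0$, $a_2<0$, $b_0<0$ has $\Delta_{1,2}\ne0$ and a first quadrant full of periodic orbits.) In fact, for \emph{every} choice of $g,\alpha_1,\alpha_2$ one has $N_{\alpha_1,\alpha_2,g}(\bar x,\bar y)=0$, because $k_1,k_2,P,Q$ all vanish at $(\bar x,\bar y)$; the best one can hope for is an $N$ of constant sign whose zero set passes through the critical point without being invariant --- e.g.\ a perfect square --- and producing such an $N$ is exactly the work your proposal leaves undone.

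The root of the error is a conflation of Remark~\ref{re:1} with the Bendixson--Dulac argument. Bautin's proof controls $\operatorname{div}\big(|x|^{\alpha}|y|^{\beta}\mathcal{X}\big)=|x|^{\alpha}|y|^{\beta}\big(\alpha k_1+\beta k_2+\operatorname{div}\mathcal{X}\big)$, and the extra term $\operatorname{div}\mathcal{X}=k_1+k_2+a_1x+b_2y$ is precisely what allows the bracket to be made a constant when $\Delta_{1,2}\ne0$; the conclusion then follows from Green's theorem, not from monotonicity of a function along orbits. Dropping that term, as Remark~\ref{re:1} with $g\equiv1$ does, destroys the mechanism, so Bautin's computation cannot simply be ``quoted'' in this framework (and quoting it would anyway defeat the stated purpose of giving a proof different from the Dulac one). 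What the paper does instead is exhibit explicit $\alpha_1,\alpha_2$ and an explicit \emph{linear} $g$ for which
\[
N_{\alpha_1,\alpha_2,g}(x,y)=\frac{b_2(a_2-b_2)CD}{a_2\Delta_{0,2}\Delta_{1,2}}\big(\Delta_{1,2}x+\Delta_{0,2}\big)^2,
\]
where the line $\Delta_{1,2}x+\Delta_{0,2}=0$ passes through the critical point $\bar x=-\Delta_{0,2}/\Delta_{1,2}$ and is not invariant; the degenerate cases $a_2b_2\Delta_{0,2}\Delta_{1,2}=0$ are disposed of first by elementary arguments, and when the constant factor $b_2(a_2-b_2)CD$ vanishes the $N\equiv0$ clause of Theorem~\ref{th:met} applies: $g|x|^{\alpha_1}|y|^{\alpha_2}$ is a first integral, so periodic orbits may exist but limit cycles cannot. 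Your write-up also needs this last distinction in the case you do treat: if the cofactors are proportional as affine functions, $k_2=\lambda k_1$, then every combination $\alpha_1k_1+\alpha_2k_2$ is a multiple of $k_1$ and none is a nonzero constant, so there too the conclusion must come from the first-integral clause, which forbids limit cycles but not periodic orbits.
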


\begin{proof}
	We define several quantities associated to \eqref{eq:lv}. Set $\Delta_{i,j}= a_ib_j-a_jb_i,$ $D=a_{{0}}a_{{1}}b_{{2}}-a_{{0}}b_{{1}}b_{{2}}-a_{{1}}a_{{2}}b_{{0}}+a_{{
			1}}b_{{0}}b_{{2}}$ and
		\[
		C=-a_{{0}}a_{{1}}b_{2}^{2}+a_{{0}}b_{{1}}b_{2}^{2}+2\,a_{{1}}a_{
			{2}}b_{{0}}b_{{2}}-a_{{1}}b_{{0}}b_{2}^{2}-a_{{2}}^{2}b_{{0}}b_{
			{1}}.
		\]
First, let us prove that when $a_2b_2\Delta_{0,2}\Delta_{1,2}=0$ the system does not have periodic orbits. When $a_2=0$ the first differential equation does not depend on $y,$ that is, $\dot x=x(a_0+a_1x),$ and hence the behaviour of $x(t)$ can not be periodic unless it is identically constant. Something similar happens when $b_2=0.$ When $\Delta_{1,2}=0$ system \eqref{eq:lv} either does not have critical points outside the axes or it has a full line of critical poins. In any case, periodic orbits are not possible. Similarly, when $\Delta_{0,2}=0$ the same happens.	

When $a_2b_2\Delta_{0,2}\Delta_{1,2}\ne0$ we will apply Remark \ref{re:1} with $f_1(x,y)=x,$ $f_2(x,y)=y,$ $k_1(x,y)=a_0+a_1x+a_2y,$ $k_2(x,y)=b_0+b_1x+b_2y,$
\begin{align*}
\alpha_1=\frac{b_2 C}{a_2\Delta_{0,2}\Delta_{1,2}},\quad  \alpha_2=\frac{b_2(a_2-b_2) \Delta_{0,1}}{\Delta_{0,2}\Delta_{1,2}}
\end{align*}	
and
\[
g(x,y)=b_2C\Delta_{1,2}x-a_2b_2(a_2-b_2)\Delta_{1,2}\Delta_{0,2}y-C(a_2-b_2)\Delta_{0,2}.
\]	
Straightforward computations give that
\[
N_{\alpha_1,\alpha_2,g}(x,y)=\frac{b_2(a_2-b_2)CD}{a_2\Delta_{0,2}\Delta_{1,2}}\big(\Delta_{1,2}x+\Delta_{0,2}\big)^2.
\]
Hence, when $b_2(a_2-b_2)CD\ne0$ the QS does not have periodic orbits. When $b_2(a_2-b_2)CD=0$ it can have periodic orbits but not limit cycles because it has a smooth first integral. For instance, the classical Lotka-Volterra system, that corresponds to $a_1=b_2=0,$ $a_0>0, b_1>0$ and $a_2<0,b_0<0,$ has a global center in the first quadrant. Notice that in this case $D=0.$
\end{proof}

\subsection{A QS with an invariant algebraic curve of degree four} In \cite{GL} the authors list QS having some classical invariant algebraic curve. Our method applies to several of them. As an example we consider the QS
\begin{equation}\label{eq:cla}
	\begin{cases}
		\dot x=- 2 k_1 r x+ 4 k_2 r y  + k_1 x^2 - 8 k_2 x y,\\
		\dot y=-3 k_1 r y+3 k_2 x^2  + 2 k_1 x y - 16 k_2 y^2,
	\end{cases}
\end{equation}
that has the invariant algebraic curve $f(x,y)= - 2 r x^3 + x^4 + 4 r^2 y^2,$ called  curve Antiversiera, with cofactor $k(x,y)=-6 k_1 r +4 k_1 x -32 k_2 y.$ 

\begin{prop}
The  QS \eqref{eq:cla} does not have limit cycles.
\end{prop}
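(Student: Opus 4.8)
The plan is to apply Theorem~\ref{th:met} directly to system \eqref{eq:cla}, exactly as was done for each of the eight families in the proof of Theorem~\ref{th:teo2}. That is, the goal is to exhibit an explicit constant $\alpha\in\R$ and a polynomial $g(x,y)$ so that the associated polynomial
\[
N_{\alpha,g}(x,y)=\alpha k(x,y)g(x,y)+\frac{\partial g}{\partial x}P(x,y)+\frac{\partial g}{\partial y}Q(x,y)
\]
does not change sign and vanishes only on a non-invariant algebraic curve. Here $P,Q$ are the right-hand sides of \eqref{eq:cla} and $k(x,y)=-6k_1r+4k_1x-32k_2y$ is the given cofactor of the Antiversiera curve $f(x,y)=-2rx^3+x^4+4r^2y^2$. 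Once such an $N_{\alpha,g}=r_0w^2$ with $r_0\in\R$ and $w$ a polynomial is found, Theorem~\ref{th:met} immediately gives that the only possible limit cycles lie on $\{f=0\}$; since that curve is unbounded and the analysis of $f$ shows it contains no oval that could serve as a limit cycle, non existence follows.

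First I would look for $g$ as a polynomial of low degree (likely degree one or two in $x,y$, possibly with coefficients depending on the parameters $k_1,k_2,r$) and treat its coefficients, together with $\alpha$, as undetermined. Expanding $N_{\alpha,g}$ and collecting monomials in $x,y$ gives a linear-in-the-$g$-coefficients but nonlinear-in-$\alpha$ system whose solution should force $N_{\alpha,g}$ into the form $r_0\,w^2(x,y)$. Following the recipe stated after Theorem~\ref{th:met}, the natural strategy is either to impose that $N$ be a perfect square times a constant, or to choose $g$ so that $N$ depends on a single linear form $w=\lambda x+\mu y+\nu$ and then demand that form appear squared. The divergence of \eqref{eq:cla} and the cofactor $k$ are both affine, which suggests that a clever choice collapses the cubic and quartic contributions and leaves a clean square.

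The main obstacle I expect is the bookkeeping: because \eqref{eq:cla} carries the three parameters $k_1,k_2,r$, the coefficients of $N_{\alpha,g}$ are polynomial expressions in these parameters, and the requirement $N_{\alpha,g}=r_0w^2$ must hold identically in $x,y$ for all admissible parameter values. Pinning down $\alpha$ and the coefficients of $g$ so that the perfect-square identity is satisfied uniformly in the parameters, while simultaneously checking that the vanishing curve $\{w=0\}$ is not itself invariant by the flow, is the delicate part; degenerate parameter values (for instance $k_2=0$, or $r=0$, or the vanishing of whatever constant plays the role of $r_0$) may have to be separated out and handled by an ad hoc argument, as was done for the omitted parameter values in the proof of Theorem~\ref{th:teo2}. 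Verifying that $\{f=0\}$ contains no limit cycle is comparatively routine, since the explicit form of the Antiversiera curve can be analysed directly to rule out a closed bounded branch.
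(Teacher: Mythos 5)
Your strategy is the right one---it is exactly the paper's---but your write-up stops where the actual proof begins. Everything you describe (undetermined coefficients in a low-degree $g$, forcing $N_{\alpha,g}$ into the form $r_0w^2$, checking that the vanishing set is not invariant) is just the generic recipe already stated after Theorem~\ref{th:met}; the entire content of a proof for system \eqref{eq:cla} is the explicit pair $(\alpha,g)$ that makes the recipe work, and you never produce it, nor give any argument that the search must succeed. There is no general principle guaranteeing that such a pair exists: each case treated in the paper requires its own exhibit. The paper's choice is $\alpha=-1/4$ and $g(x,y)=r-2x$, for which a one-line computation gives $N_{\alpha,g}(x,y)\equiv\tfrac32 k_1r^2$, a \emph{constant}. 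This is even cleaner than the shape $r_0w^2(x,y)$ you aim for: when $k_1r\ne0$ the derivative of $v=g\,|f|^{-1/4}$ vanishes nowhere off $\{f=0\}$, so there are no periodic orbits at all outside the curve, and there is no locus $\{w=0\}$ whose non-invariance would need checking.

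Two further points where your outline goes astray. First, the degenerate parameter values are not handled ``ad hoc'': they are precisely $k_1r=0$, where $N_{\alpha,g}\equiv0$, and then the ``Moreover'' clause of Theorem~\ref{th:met} applies---$g(x,y)|f(x,y)|^{-1/4}$, equivalently $g^4(x,y)/f(x,y)$, is a first integral, so periodic orbits may exist (for $k_1=0$, $r\ne0$ the system has a center) but limit cycles cannot. Second, your dismissal of limit cycles on $\{f=0\}$ rests on a false statement: for $r\ne0$ the Antiversiera is \emph{not} unbounded; from $4r^2y^2=x^3(2r-x)$ one sees it is a bounded closed curve. The correct reason it carries no limit cycle is that it passes through the origin, which is a critical point of \eqref{eq:cla}, so the curve cannot be a periodic orbit.
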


\begin{proof}
By using Theorem \ref{th:met} with $\alpha=-1/4$ and $g(x,y)=r-2x$ we get that $N_{\alpha,g}(x,y)=\frac32k_1r^2.$  Hence, when $r k_1\ne0$ it does not have periodic orbits. When $r k_1=0$ it can have periodic orbits but not limit cycles because  it has the first integral $g^4(x,y)/f(x,y).$ For instance, when  $k_1=0$ and $r\ne0$ the QS has a center.
\end{proof}

\subsection{A QS with an invariant algebraic curve of degree twelve} In \cite{CLS} the authors prove that the QS
\begin{equation}\label{eq:12}
	\begin{cases}
		\dot x=1 +x^2 +xy,\\
		\dot y=\dfrac{57}2-\dfrac{81}2x^2+3y^2,
	\end{cases}
\end{equation}
has the invariant algebraic curve $$f(x,y)= -5488\,{y}^{4}-32\,x \left( 35125\,{x}^{2}-1029 \right) {y}^{3}+
f_2(x){y}^{2}+f_1(x) y+f_0(x),$$
where
\begin{align*}
	f_2(x)& = -375000 {x}^{6}+5058000 {x}^{4}-711288 {x}^{2}-98784,\\
	f_1(x)&= 8 x \left( 1953125 {x}^{8}+140625 {x}^{6}+2932875
	{x}^{4}-1515789 {x}^{2}+40284 \right),\\
	f_0(x)&= 48828125 {x}^{12}-23437500
	{x}^{10}+41343750 {x}^{8}-97906500 {x}^{6}\\&\quad+71546517 {x}^{4}-
	7246584{x}^{2}-442368,
\end{align*}
with cofactor $k(x,y)=12(x+y)$ and that it is not Liouvillian integrable. We show that it does not have periodic orbits.

\begin{prop}\label{th:hyper}
	The quadratic system  \eqref{eq:12} does not have periodic orbits.
\end{prop}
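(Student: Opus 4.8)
The plan is to apply Theorem~\ref{th:met}. The decisive observation is that $N_{\alpha,g}=\alpha k g+g_xP+g_yQ$ has degree $\deg g+1$ and depends only on $P$, $Q$ and the cofactor $k=12(x+y)$, and never on $f$ itself; so although $f$ has degree twelve, the search for a good pair $(\alpha,g)$ involves only low-degree polynomials. I would therefore start from the simplest ansatz, with $g$ linear and $\alpha$ a free real parameter, compute $N_{\alpha,g}$, and impose that it be sign-definite, ideally a nonzero constant or of the form $rw^2$. Matching the quadratic, linear and constant coefficients of $N_{\alpha,g}$ then reduces to a small algebraic system in $\alpha$ and the coefficients of $g$, following the first practical strategy described after Theorem~\ref{th:met}.

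Carrying this out, the choice $\alpha=-\tfrac{1}{12}$ and $g(x,y)=x$ already gives
\[
N_{\alpha,g}(x,y)=-\tfrac{1}{12}\,12(x+y)\,x+(1+x^2+xy)=1,
\]
a nonzero constant. Hence, setting $v(x,y)=x\,|f(x,y)|^{-1/12}$, one has $\dot v=|f|^{-1/12}N_{\alpha,g}=|f|^{-1/12}>0$ on each connected component $\mathcal{U}$ of $\R^2\setminus\{f=0\}$. Thus $v$ is strictly increasing along every trajectory lying in such a $\mathcal{U}$, and no periodic orbit can be contained in $\R^2\setminus\{f=0\}$.

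It remains to exclude periodic orbits lying on the invariant curve itself, and this is the step I expect to be the main obstacle. First I would record that both finite critical points of \eqref{eq:12}, namely $(1,-2)$ and $(-1,2)$, lie on $\{f=0\}$: at a critical point $p$ one has $\dot f(p)=f_x(p)P(p)+f_y(p)Q(p)=0$, while $\dot f=kf$ gives $k(p)f(p)=0$, and since $k(1,-2)=-12\neq0$ and $k(-1,2)=12\neq0$ we conclude $f(1,-2)=f(-1,2)=0$. A periodic orbit of a quadratic system is a convex oval surrounding exactly one such focus, so if it were contained in $\{f=0\}$ it would be a compact oval of this real curve encircling a point that is itself on the curve. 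To rule this out I would combine the relation for the characteristic exponent of an orbit on an invariant curve, $h=\oint_\gamma k\,{\rm d}t=12\oint_\gamma(x+y)\,{\rm d}t$ (as in \cite{GG}), with the identities $\oint_\gamma P\,{\rm d}t=\oint_\gamma Q\,{\rm d}t=0$, and with the fact that the homogeneous part of $f$ of highest degree is the single term proportional to $x^{12}$, which constrains the branches of $\{f=0\}$ at infinity and hence its compact ovals.

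The routine part is the coefficient matching that produces $N_{\alpha,g}\equiv1$; the genuinely delicate point is this last one, namely showing that the real curve $\{f=0\}$ carries no periodic orbit around either focus. If a direct topological description of the ovals of $\{f=0\}$ proves awkward, an alternative clincher is to exploit the blow-up of $v$ near the curve: since $|f|^{-1/12}\to\infty$ as $f\to0$ and $v=x\,|f|^{-1/12}$, the forced strict growth of $v$ along nearby trajectories is incompatible with an orbit on $\{f=0\}$ being approached from a side where $v$ would have to decrease, and is already incompatible with $v$ being monotone whenever the orbit meets the line $x=0$; the characteristic-exponent identity would then dispose of the remaining stable case.
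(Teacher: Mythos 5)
Your first half coincides exactly with the paper's proof: the choice $\alpha=-1/12$, $g(x,y)=x$ gives $N_{\alpha,g}\equiv 1$, and Theorem~\ref{th:met} then excludes periodic orbits in $\R^2\setminus\{f(x,y)=0\}$. Your observation that both critical points lie on the curve (from $\dot f=kf$ one gets $k(p)f(p)=0$ at a critical point $p$, and $k(\pm1,\mp2)=\mp12\ne0$) is also correct.

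The genuine gap is the second half: excluding periodic orbits contained in $\{f=0\}$ itself. The paper settles this by a direct study of the real algebraic curve, concluding that it contains no closed ovals at all (its bounded real points being just the isolated points $(1,-2)$ and $(-1,2)$); you never establish anything of this kind, and the substitutes you sketch do not close the case. The blow-up argument based on $v=x|f|^{-1/12}$ is structurally incapable of doing the job: functions produced by Theorem~\ref{th:met} are monotone only off the curve, and their existence is perfectly compatible with a periodic orbit sitting on the curve --- that is exactly the situation in all eight systems of Theorem~\ref{th:teo2}, where such a sign-definite $N_{\alpha,g}$ exists and yet the invariant curve carries an algebraic limit cycle. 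Concretely, a periodic orbit on the curve would surround $(1,-2)$ or $(-1,2)$, and nothing forces it to meet the line $x=0$ (the foci have $x=\pm1$), so your ``incompatible whenever the orbit meets $x=0$'' clause need not apply; and for, say, a stable cycle $\gamma\subset\{f=0\}\cap\{x>0\}$ around $(1,-2)$, a trajectory spiraling toward $\gamma$ in forward time has $v\to+\infty$, which is entirely consistent with $\dot v>0$, so no contradiction arises (by the symmetry $(x,y,t)\mapsto(-x,-y,-t)$ of \eqref{eq:12}, the mirror configuration of an unstable cycle around $(-1,2)$ is equally consistent). Finally, the characteristic-exponent route is only gestured at: the identity $h=\oint k\,{\rm d}t=12\oint(x+y)\,{\rm d}t$ of \cite{GG} combined with $\oint \dot x\,{\rm d}t=\oint\dot y\,{\rm d}t=0$ yields $\oint x(x+y)\,{\rm d}t=-T$ and $\oint\bigl(57-81x^2+6y^2\bigr)\,{\rm d}t=0$, from which no sign of $h$ follows without substantial extra work. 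As written, the proof is therefore incomplete; what is missing is precisely the paper's verification that the real curve $f(x,y)=0$ has no ovals.
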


\begin{proof}
	A detailed study of the curve $f(x,y)=0$ shows that it does not contain closed ovals, although it contains the isolated points $(1,-2)$ and $(-1,2).$ In fact, these two points are the unique critical points of the QS, and are foci with opposite stabilities. To prove that the QS does not have periodic orbits in $\R^2\setminus\{f(x,y)=0\}$ we apply Theorem \ref{th:met} with $g(x,y)=x$ and $\alpha=-1/12.$ Then $N_{\alpha,g}(x,y)\equiv 1$ and the result follows. 
\end{proof}

\subsection{Nonexistence of limit cycles for QS with an invariant parabola}

We apply Theorem \ref{th:met} to give a simple test for non existence of limit cycles for the family of QS with an invariant parabola~\eqref{eq:para},
 \begin{equation*}
	\begin{cases}
		\dot x=a+bx+hy +c(y-x^2)+exy,\\
		\dot y=2x(a+bx+hy)+d(y-x^2)+2ey^2.
	\end{cases}
\end{equation*}
Now, we prove the following result:

\begin{prop}\label{th:para-ne}
Associated to the QS \eqref{eq:para}, define
\begin{align*}
	\Delta=\Big(& 16{a}^{2}{e}^{3}+\big(( -8bc-16hb-12cd-8dh ) a+ ( 2b-d)^{3}\big)e^2\\&-2(c+h)\big(4c ( c+2h ) a-8{b}^{2}c+6bcd-4bdh-3c{d}^{2}   \big)e\\& +8c( c+h ) ^{2}( bc+dh)\Big)\Big( (2b-d)e+2c^2+2hc\Big) e.
\end{align*}
Then if $\Delta\ge0,$ system \eqref{eq:para} does not have periodic orbits. 
\end{prop}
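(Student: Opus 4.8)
The plan is to apply Theorem~\ref{th:met} to the parabola family \eqref{eq:para} with a cleverly parametrized linear function $g$ and a suitable exponent $\alpha$, arranging that the associated polynomial $N_{\alpha,g}$ becomes a perfect square up to a constant multiple, so that it does not change sign. Guided by the heuristic stated right after Theorem~\ref{th:met}, I would take $g$ to be an affine function, $g(x,y)=g_0+g_1x+g_2y$, and regard $g_0,g_1,g_2$ and $\alpha$ as unknowns. Recall from Section~3 that the cofactor of $f=y-x^2$ is $k(x,y)=-2cx+2ey+d$. Since $P$ and $Q$ have degree two and $g$ is affine, $N_{\alpha,g}=\alpha k g+g_xP+g_yQ$ is a polynomial of degree at most two in $(x,y)$; write it as a general quadratic form $N=Ax^2+Bxy+Cy^2+Dx+Ey+F$ with coefficients that depend polynomially on the parameters $a,b,c,d,e,h$ and on $g_0,g_1,g_2,\alpha$.

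The key step is to impose that $N$ be (a constant times) a perfect square of a linear form, $N=r\,(px+qy+s)^2$. This is equivalent to demanding that the symmetric matrix of the quadratic part have rank one and that $N$ factor as a square overall; concretely it forces the three conditions $B^2-4AC=0$ together with the analogous discriminant conditions coupling the linear and constant coefficients (so that the full $3\times3$ Gram matrix of $N$, viewed as a quadratic form in $(x,y,1)$, has rank one). The strategy is to use the freedom in $g_0,g_1,g_2,\alpha$ to satisfy these rank conditions and thereby solve for the parameters, reading off $r$ as an explicit rational expression in $a,b,c,d,e,h$. The claim is then that $r$ is, up to a positive factor coming from $|f|^\alpha\ge0$ and the square $(px+qy+s)^2\ge0$, proportional to $-\Delta$; hence $\Delta\ge0$ forces $r\le0$, so $N_{\alpha,g}$ does not change sign and Theorem~\ref{th:met} yields the nonexistence of periodic orbits. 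I would present the resulting $\alpha$ and $g$ explicitly (as in the cases of Theorem~\ref{th:teo2}) and simply record the final identity $N_{\alpha,g}=r(px+qy+s)^2$, leaving the verification to a direct expansion.

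Before that, I would dispose of the degenerate values of the parameters where the construction breaks down, i.e.\ where the denominators appearing in $\alpha$ and in $g$ vanish. From the structure of $\Delta$, the natural obstructions are $e=0$ and $(2b-d)e+2c^2+2hc=0$, the two factors that appear in $\Delta$ and that presumably sit in the denominators of the chosen $\alpha,g$. For these exceptional cases I would argue separately: when $e=0$ one can reuse the analysis already carried out in the proof of Theorem~\ref{th:para} (case (i)), where $g=1$, $\alpha=1$ gives $N=d$, or the quadratic-square construction there with $\alpha=h/c$; and when the second factor vanishes one treats it as a boundary subcase, checking directly that $N$ degenerates to a form that still does not change sign or that the system acquires a first integral. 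In each exceptional branch the conclusion should survive because $\Delta$ vanishes there, which is consistent with the hypothesis $\Delta\ge0$.

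The main obstacle will be the bookkeeping: verifying that the rank-one (perfect-square) conditions on $N$ can indeed be solved in closed form for generic parameters and that the leading constant $r$ collapses exactly to a negative multiple of $\Delta$. This is a dense but mechanical elimination, best done with a computer algebra system, and the genuinely delicate part is organizing the case split so that the exceptional loci where the denominators vanish are precisely the loci where $\Delta=0$, ensuring no gap is left uncovered. Once the identity $N_{\alpha,g}=r(px+qy+s)^2$ is established with $\operatorname{sign}(r)=-\operatorname{sign}(\Delta)$ on the generic stratum, the nonexistence of periodic orbits for $\Delta\ge0$ is immediate from Theorem~\ref{th:met}, since $(px+qy+s)^2$ vanishes only on a straight line, which is not invariant for generic parameters.
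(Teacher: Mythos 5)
Your setup coincides with the paper's: take $g(x,y)=g_0+g_1x+g_2y$, treat $g_0,g_1,g_2,\alpha$ as unknowns, impose that $N_{\alpha,g}$ be a constant multiple of the square of a linear form (rank-one/discriminant conditions), and invoke Theorem~\ref{th:met}. However, the mechanism by which you make the hypothesis $\Delta\ge0$ enter the argument is wrong, and that mechanism is the heart of the proof. You claim the rank-one conditions can be solved over the reals for generic parameters, that $r$ comes out proportional to $-\Delta$, and that $\Delta\ge0$ is needed to force $r\le0$ ``so that $N_{\alpha,g}$ does not change sign.'' This cannot be the right bookkeeping, for two reasons. First, the sign of $r$ is irrelevant to Theorem~\ref{th:met}: once $N_{\alpha,g}=r\,w^2$ with $r\in\R$ and $w$ a real linear form, $N_{\alpha,g}$ does not change sign whether $r$ is positive or negative. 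Second, and as a consequence, if the real decomposition $N_{\alpha,g}=r\,w^2$ existed for all (generic) values of $a,b,c,d,e,h$, the proposition would hold with no hypothesis at all --- contradicting the fact that the family \eqref{eq:para} does have a limit cycle for suitable parameters (precisely the situation addressed by Theorem~\ref{th:para}). So the genuine obstruction has to be the solvability \emph{over $\R$} of the equations determining $\alpha$ and $g$, not the sign of $r$.

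That is exactly how the paper's proof runs: imposing $\operatorname{disc}_y(N)\equiv0$, one solves linearly for $g_0$ (when a certain $u_1(\mathbf A,\mathbf G)\ne0$, a restriction later removed by rescaling $g$ by $\beta\in\R$ to clear denominators), obtains a linear relation tying $g_1$ and $g_2$, and is left with a quadratic equation $v_0(\mathbf A)+v_1(\mathbf A)\alpha+v_2(\mathbf A)\alpha^2=0$ for $\alpha$, whose discriminant equals $4\big((2b-d)e+2c^2+2hc\big)^2\Delta$. Thus $\Delta\ge0$ is precisely the condition for a \emph{real} $\alpha$ to exist; when $\Delta<0$ the perfect-square ansatz simply has no real solution. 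Had you carried out your plan, you would have stalled exactly here: instead of reading off $r$ as a rational expression with $\operatorname{sign}(r)=-\operatorname{sign}(\Delta)$, you would find that $\alpha$ (hence $r$ and $w$) involves $\sqrt{\Delta}$ modulo squares. Your organization of the exceptional cases is also misplaced for the same reason: the loci needing separate care are where denominators of the elimination vanish (e.g.\ $u_1=0$), not the loci $e=0$ or $(2b-d)e+2c^2+2hc=0$; on those two loci $\Delta=0\ge0$, the quadratic in $\alpha$ has a real double root, and the construction generically still goes through, so no appeal to Theorem~\ref{th:para} is required there.
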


\begin{proof}
Recall that for the above QS the invariant parabola is $f(x,y)=y-x^2=0,$ and its cofactor  $k(x,y)=-2cx+2ey+d.$ For shortness we will write $\mathbf{A}$ to refer to the set of parameters $a,d, c, d, e, h$ and $\mathbf{G}$ to refer to the set of parameters $g_0,g_1,g_2,\alpha.$ If in Theorem \ref{th:met} we take $g(x,y)=g_0+g_1x+g_2y,$ it holds that
\[
N:=N_{\alpha,g}(x,y)= \sum_{0\leq i+j\leq2}n_{i,j}(\mathbf{A},\mathbf{G}   )x^iy^j,
\]
where $n_{i,j}$  are polynomials in their variables.
The idea is to search a condition among the parameters  $\mathbf{A}$ in such a way that the free parameters $\mathbf{G}$ can be written in terms of the parameters  $\mathbf{A},$ $\mathbf{G}=\mathbf{G}(\mathbf{A}),$  and moreover
\begin{equation}\label{eq:con}
N= \sum_{0\leq i+j\leq2}n_{i,j}(\mathbf{A},\mathbf{G}(\mathbf{A})   )x^iy^j=r\left(\sum_{0\leq i+j\leq1}w_{i,j}(\mathbf{A})x^iy^j\right)^2,
\end{equation}
where $w_{i,j}$ are polynomials on the variables $\mathbf A$ and $r\in\R.$

We claim that this condition is $\Delta\ge0.$ From this claim, the above equality, and Theorem \ref{th:met}, our result follows.

Let us give some hints for the proof of the claim. We skip the details and the cumbersome expressions.  A first observation  is that a necessary condition for the existence of $r$ and the polynomials $w_{i,j}$ in \eqref{eq:con} is that the discriminant ($\operatorname{disc}$) of $N,$ either as a quadratic polynomial in $x,$ or as a quadratic polynomial in $y,$ are identically zero. For instance, the condition
\begin{equation}\label{eq:dis}
\operatorname{disc}_y(N)=s_0(\mathbf{A},\mathbf{G} )+s_1(\mathbf{A},\mathbf{G} )x+s_2(\mathbf{A},\mathbf{G} )x^2\equiv 0
\end{equation}
has to be satisfied.
The equation $ s_1(\mathbf{A},\mathbf{G} )=0$ is of the form
\[
 s_1(\mathbf{A},\mathbf{G})=u_0(\mathbf{A},\mathbf{G})+ u_1(\mathbf{A},\mathbf{G})g_0=0,
\] 
for some polynomials $u_0$ and $u_1$ that do not depend on $g_0.$ Hence, when $u_1(\mathbf{A},\mathbf{G})\ne0,$
the value $g_0$ can be obtained from all the other parameters  $\mathbf{A}$ and $\mathbf{G}.$  By using this value of $g_0$ we get that the equality~\eqref{eq:dis} writes as
\[
\operatorname{disc}_y(N)=t_0(\mathbf{A},\mathbf{G} )+t_2(\mathbf{A},\mathbf{G} )x^2\equiv 0,
\]
where the parameter $g_0$ is no more in $\mathbf{G},$ $t_0$ is a rational function, with $u_1(\mathbf{A},\mathbf{G})$ in its denominator,  and $t_2$ a polynomial one. Next, studying the system of equations
\[
t_0(\mathbf{A},\mathbf{G} )=0, \quad t_2(\mathbf{A},\mathbf{G} )=0,
\]
we  conclude that it has solutions if
\[
( -2be-2{c}^{2}-2ch+ed) g_1+ ( 4ae-2c
d-2dh)g_2=0
\]
and $\alpha$ satisfies a quadratic polinomial equation of the form
\begin{equation}\label{eq:fff}
v_0(\mathbf{A})+v_1(\mathbf{A})\alpha+v_2(\mathbf{A})\alpha^2=0.
\end{equation}
Hence our problem, when  $u_1(\mathbf{A},\mathbf{G})\ne0,$  has been essentially reduced to the existence of a real value $\alpha$ satisfying the above quadratic equation. Finally, using that if we have a suitable set of values $g_0,g_1$ and $g_2,$ for which \eqref{eq:con} is satisfied,  the same holds for $\beta g_0,\beta g_1$ and $\beta g_2,$ for any $\beta\in\R,$ we can eliminate the presence of denominators during all the process and the condition $\operatorname{disc}_y(N)\equiv 0$ follows for all the values of the parameters.  The  discriminant of \eqref{eq:fff} with respect to $\alpha$ is precisely $4((2b-d)e+2c^2+2hc)^2\Delta,$ and so, a suitable $\alpha$ exists when $\Delta\ge0.$
\end{proof}

Notice the non existence result of Proposition~\ref{th:para-ne} also provides, through the birrational transformation used in its proof,  conditions for non existence of limit cycles of some QS with an invariant straight line.

For some cases of QS with an invariant straight line a direct proof of non existence of limit cycles for some values of the parameters also can be addressed with our approach. Next results presents an example for the simple 1-parametric subcase,
 \begin{equation}\label{eq:qsl}
	\begin{cases}
		\dot x=dx-y+\frac14x^2+\frac15xy-\frac13y^2,\\
		\dot y=x(1+y),
	\end{cases}
\end{equation}
that has the invariant straight line $f(x,y)=1+y=0$ with cofactor $k(x,y)=x.$
The results of \cite[Thm 2']{H}, only apply to QS without critical points on this invariant line and  only one singularity at infinity, the one corresponding  to this line. For our system these hypotheses correspond  to $d\in(1/5-\sqrt{6}/3, 1/5+\sqrt{6}/3) .$   By using this result and the properties of the rotated families of vector fields, if $d^\pm=(3\pm\sqrt{6})/15,$  it holds that the system has a limit cycle only when $d\in(0,d^-),$ where $d^-\approx0.0367,$ and it surrounds the origin or when $d\in(d^+,3/5),$ where  $d^+\approx0.3633,$ and it surrounds $(0,-3)$ and always it is unique and  hyperbolic. We prove a much weaker result, but with a simple proof.

\begin{prop} System  \eqref{eq:qsl}, with $d\ge 1/5,$ does not have periodic orbits surrounding the origin.
\end{prop}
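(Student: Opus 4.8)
The plan is to apply Theorem~\ref{th:met} with the invariant line $f(x,y)=1+y=0$, whose cofactor is $k(x,y)=x$. First I would record the geometric constraint coming from this line: since $\{y=-1\}$ is invariant, any periodic orbit surrounding the origin lies entirely in the connected component $\mathcal{U}=\{y>-1\}$ of its complement, on which $|f|^\alpha=(1+y)^\alpha$ is smooth and positive. Hence it suffices to produce $\alpha\in\R$ and a polynomial $g$ for which $N_{\alpha,g}$ does not change sign on $\mathcal{U}$ and vanishes there only on a non-invariant curve.

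Next I would run the standard search suggested after Theorem~\ref{th:met}: take $g$ with undetermined coefficients (a general quadratic $g=g_0+g_1x+g_2y+g_3x^2+g_4xy+g_5y^2$ turns out to be enough) and impose conditions on the $g_i$ and on $\alpha$ forcing $N_{\alpha,g}$ to be sign-definite on $\mathcal{U}$. A first useful observation is that $N_{\alpha,g}(0,0)=0$ automatically, because the origin is a critical point; so the zero set of $N$ always meets the region and the real goal is to make $N$ one-signed with zero set inside $\mathcal{U}$ a non-invariant curve. Viewing $N_{\alpha,g}$ as a polynomial in $x$ with $y$-dependent coefficients, definiteness forces the $x$-degree to be even, which removes the $x^3$ term and steers the choice to $\alpha=-\tfrac12$. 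Carrying this through, the clean solution is $\alpha=-\tfrac12$ together with $g(x,y)=x^2+\tfrac49y^2+\tfrac{20}{9}y+\tfrac{40}{9}$ (which is in fact everywhere positive, being $x^2+\tfrac49(y+\tfrac52)^2+\tfrac53$), for which a direct computation collapses almost all monomials and leaves
\[
N_{-1/2,\,g}(x,y)=\tfrac25\,x^2\,(5d+y).
\]

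With this in hand the proof finishes quickly: on $\mathcal{U}=\{y>-1\}$ one has $5d+y>5d-1$, so $N_{-1/2,g}\ge0$ there precisely when $d\ge\tfrac15$, and it vanishes only on $\{x=0\}$; moreover $\{x=0\}$ is not invariant, since $\dot x|_{x=0}=-\tfrac13y(y+3)\not\equiv0$. Thus Theorem~\ref{th:met} applies on the component $\mathcal{U}$ and rules out any periodic orbit contained in it, in particular any surrounding the origin. I expect the only genuine work to be the algebraic step of selecting $g$ and $\alpha$: the sign analysis must reduce exactly to the stated threshold, and this is where the elementary fact $\inf_{y>-1}(5d+y)=5d-1\ge0\iff d\ge\tfrac15$ enters. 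As a consistency check I would note that the same threshold arises from the generalized Bendixson--Dulac viewpoint with Dulac function $B=(1+y)^{-3/2}$, since $\operatorname{div}(B\mathcal{X})=(1+y)^{-3/2}\big(\tfrac15y+d\big)$ keeps constant sign on $\{y>-1\}$ exactly when $d\ge\tfrac15$.
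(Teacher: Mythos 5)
Your proof is correct, and although it rests on the same general method (Theorem~\ref{th:met} applied to the invariant line $1+y=0$), your certificate is genuinely different from the paper's. I verified your key computation: with $\alpha=-\tfrac12$ and $g(x,y)=x^2+\tfrac49y^2+\tfrac{20}9y+\tfrac{40}9$ the terms in $x^3$, $xy^2$, $xy$ and $x$ all cancel, leaving exactly $N_{-1/2,g}(x,y)=\tfrac25x^2(5d+y)$, and the rest of your argument is sound: periodic orbits surrounding the origin must lie in the component $\mathcal{U}=\{y>-1\}$, where $N\ge0$ for $d\ge\tfrac15$ and vanishes only on the non-invariant line $\{x=0\}$ (indeed $\dot x|_{x=0}=-\tfrac13y(y+3)\not\equiv0$). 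The paper instead takes $\alpha=0$ and a \emph{non-polynomial} ansatz $g(x,y)=g_0(y)+g_1(y)x+g_2(y)x^2$, absorbing the factor $(1+y)^\alpha$ into the unknown functions; it solves the linear ODEs forcing $N$ to be independent of $x$, which gives $g_2(y)=(1+y)^{-1/2}$, $g_1(y)=-\tfrac8{15}\big((15d-4)(\sqrt[4]{1+y}-1)+y\big)(1+y)^{-1/2}$, and hence $N(x,y)=\tfrac8{45}\tfrac{y+3}{\sqrt{1+y}}\,y\,M(y)$ with $M(y)=(15d-4)(\sqrt[4]{1+y}-1)+y$, whose sign on $y>-1$ requires a further study of $M$ and its derivative. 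Your choice trades those ODEs and the analysis of $M$ for pure polynomial algebra and a trivial sign check, which is cleaner and closer to the literal statement of Theorem~\ref{th:met}; the price is that your $N$ changes sign across $y=-5d$ in the lower half-plane, so the restriction to $\mathcal{U}$ is essential — but you make it explicit, and it is in any case unavoidable, since the paper's own $N$ only makes sense for $y>-1$ and, for $d\in(d^+,3/5)$, the system does have a limit cycle around $(0,-3)$ in $\{y<-1\}$, ruling out any globally sign-definite choice. Finally, your cross-check is correct, $\operatorname{div}\big((1+y)^{-3/2}\mathcal{X}\big)=(1+y)^{-3/2}\big(d+\tfrac15y\big)>0$ on $\{y>-1\}$ when $d\ge\tfrac15$, and since that half-plane is simply connected it actually constitutes a second, even shorter self-contained proof via the Bendixson--Dulac theorem.
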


\begin{proof}
	We will apply Theorem \ref{th:met} with $\alpha=0$ and $g(x,y)=g_0(y)+g_1(y)x+g_2(y)x^2,$ and a slightly different approach that in previous cases considered in this paper, for studying when $N_{0,g}$ does not change sign. Notice that with this point of view, in this case there is no need to introduce the parameter $\alpha$ in the approach because the term $|f(x,y)|^\alpha$ introduced in the proof of Theorem \ref{th:met} reduces to $(y+1)^\alpha$ in the region $y+1>0$ and this function can be thought to be already contained in the functions $g_i(y).$ If we compute $N=N_{0,g}$ we obtain
	\begin{align*}
		N(x,y)&=\Big((1+y)g_2'(y)+\frac12g_2(y)\Big)x^3\\&\quad+\Big((1+y)g_1'(y)+\frac14g_1(y)+\Big(2d+\frac25y\Big)g_2(y)\Big)x^2\\&\quad+\Big((1+y)g_0'(y)+\Big(d+\frac15y\Big)g_1(y)-2y\Big(1+\frac13 y\Big)g_2(y)\Big)x\\&\quad-\frac13y(y+3)g_1(y).
	\end{align*}
\end{proof}
To impose that $N$ does not depend on $x$ we have to solve three ordinary differential equations, one for each $g_i, i=0,1,2.$ By imposing that $g_2(0)=1$ and $g_1(0)=0$ we obtain that
\[
g_2(y)=\frac1{\sqrt{1+y}},\quad g_1(y)=-\frac8{15}\frac{(15d-4)(\sqrt[4]{1+y}-1)+y}{\sqrt{1+y}}.
\]
We do not need to find explicitly  $g_0,$ but only to know its existence for all $y+1>0.$ This follows from the differential equation associated to the coefficient of $x$ in $N.$ By taking these functions
\[
N(x,y)=-\frac13y(y+3)g_1(y)=\frac{8}{45}\frac{y+3}{\sqrt{1+y}}yM(y),\] where $M(y)=(15d-4)(\sqrt[4]{1+y}-1)+y.$ Hence the result will follow if we prove that $yM(y)\ge0$ for $y\ge-1,$ when $d\ge 1/5.$ This can be seen by studying the function $M$ and its derivative. Notice  that $M(-1)=3-15d\le0$ and that at $y\simeq0,$  $M(y)=\frac{15}4dy+O(y^2).$

\subsection{Nonexistence of limit cycles for some QS with an invariant cubic} It is known that there are two families of QS with an invariant cubic  and that have a (unique) limit cycle not contained on this curve, see for instance \cite{CG2,S2,ZK2}. We consider one of these families and give conditions on its parameters to have non existence of periodic orbits.

We take the system 
 \begin{equation}\label{eq:cubic}
	\begin{cases}
		\dot x=\dfrac12(1-x)x-\dfrac L 2 x^2+\dfrac12 u(-1+2x+xy),\\
		\dot y=-1-y-uy(1+y)+L(1+xy),
	\end{cases}
\end{equation}
where $L$ and $u$ are real parameters. It has the invariant algebraic cubic $f(x,y)=-1+2x+x^2y=0,$ with cofactor $k(x,y)=u-x,$ and for some values of $L$ and $u$ it also has a limit cycle. We prove:

\begin{prop}
	System \eqref{eq:cubic}, with $(1-u+Lu)(6-5u+9Lu)\ge0,$ does not have  periodic orbits.	
\end{prop}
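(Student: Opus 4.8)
The plan is to apply Theorem~\ref{th:met} directly to system~\eqref{eq:cubic}, exactly as in the previous propositions of this section. The invariant cubic is $f(x,y)=-1+2x+x^2y=0$ with cofactor $k(x,y)=u-x$, so I would look for a constant $\alpha\in\R$ and a polynomial $g(x,y)$ such that $N_{\alpha,g}(x,y)=\alpha k(x,y)g(x,y)+\partial_x g\cdot P+\partial_y g\cdot Q$ does not change sign and vanishes only on a non-invariant curve. Guided by the other cases, I expect the right ansatz to be a low-degree $g$ (probably affine or quadratic, possibly after building in a factor that matches the structure of the cubic) together with a rational value of $\alpha$, chosen so that $N_{\alpha,g}$ becomes a constant multiple of a perfect square $r\,w^2(x,y)$. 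The sign condition on the square's coefficient should then turn out to be exactly $(1-u+Lu)(6-5u+9Lu)\ge0$.

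First I would substitute $P$ and $Q$ from~\eqref{eq:cubic} into the definition of $N_{\alpha,g}$ with $g$ carrying undetermined coefficients, and collect the result as a polynomial in $x$ and $y$. The goal is to impose that all the coefficients, apart from those realizing a single perfect square, vanish; this yields a system of algebraic equations in the coefficients of $g$ and in $\alpha$, whose solution should express these in terms of $L$ and $u$. Solving for $\alpha$ will, as in the parabola computation of Proposition~\ref{th:para-ne}, typically reduce to a quadratic equation whose discriminant governs the existence of a real $\alpha$; I anticipate that the leading coefficient $r$ of the resulting square factors through $(1-u+Lu)$ and $(6-5u+9Lu)$, so that $N_{\alpha,g}=r\,w^2\ge0$ precisely when the stated product is nonnegative.

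As in the neighbouring proofs, I would also have to treat the degenerate locus where the denominators appearing in $\alpha$ or in the coefficients of $g$ vanish, or where $r=0$. On the boundary $r=0$ the function $v=g\,|f|^\alpha$ becomes a first integral, so by the second conclusion of Theorem~\ref{th:met} the system still has no limit cycles outside $f=0$; the remaining special parameter values (where the hyperbola-type factors vanish) can be handled separately or dismissed because there the cubic carries no limit cycle, just as in the convention adopted at the start of Section~4.

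The main obstacle I expect is purely computational rather than conceptual: finding the correct form of $g$ and the correct rational $\alpha$ so that the many coefficients of $N_{\alpha,g}$ collapse into a single square with coefficient proportional to $(1-u+Lu)(6-5u+9Lu)$. The cubic cofactor and the presence of the product $xy$ terms in $P$ and $Q$ make the elimination somewhat delicate, and one must verify that the vanishing set of $w$ is genuinely non-invariant so that Theorem~\ref{th:met} applies; but once the right $\alpha$ and $g$ are identified, the sign condition reads off immediately and the conclusion follows, with the degenerate cases absorbed by the first-integral alternative of the theorem.
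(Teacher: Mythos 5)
Your proposal matches the paper's proof in all essentials: the paper applies Theorem~\ref{th:met} with an affine $g$ (coefficients depending on $\alpha$, $L$, $u$) and with $\alpha$ a real root of a quadratic equation, obtaining $N_{\alpha,g}=r\,w^2$ for a linear polynomial $w$ --- exactly the undetermined-coefficients, perfect-square scheme of Proposition~\ref{th:para-ne} that you describe, with the discriminant of the quadratic in $\alpha$ equal to $16u^2(1-u+Lu)(6-5u+9Lu)$. One small clarification: the hypothesis enters solely as that discriminant condition guaranteeing a real $\alpha$ exists, not through the sign of the coefficient $r$, which is irrelevant since $r\,w^2$ never changes sign in any case.
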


\begin{proof}
	The proof follows exactly the same steps that the proof of Proposition \ref{th:para-ne}, but the computations are much easier and explicit. By taking in Theorem \ref{th:met}
	$$g(x,y)= -(u\alpha-u+1)(u\alpha-u-3)+8\alpha(Lu-u+1)x+4\alpha u^2 y,$$
with $\alpha$ real and such that
\[
-{u}^{2}{\alpha}^{2}+2u ( 6Lu-5u+5) \alpha+4L{u}^{
	2}-5{u}^{2}+6u-1=0,
\]
 it holds that $N(x,y)=r w^2(x,y),$ for some linear polynomial $w.$ The discriminant with respect to $\alpha$ of the above quadratic equation is $16u^2(1-u+Lu)(6-5u+9Lu)\ge0,$ and hence the proposition follows. 
\end{proof}

\section{Other families of planar systems}\label{se:of}

This final section is devoted to present some results on non existence of limit cycles for  other families of polynomial vector fields, most of them having  an invariant algebraic curve. Again we apply Theorem~\ref{th:met} and it includes cubic and Liénard systems.

\subsection{Some cubic systems}\label{ss:cs}

We start with the cubic system 
 \begin{equation}\label{eq:cub1}
	\begin{cases}
		\dot x=ax+y-(ax^2+xy+ay^2)(x+y)=P(x,y),\\
		\dot y=ay-(ax^2+xy+ay^2)(y-x)=Q(x,y),
	\end{cases}
\end{equation}
with the invariant algebraic curve $f(x,y)=x^2+y^2-1$ and cofactor $k(x,y)=-2(ax^2+xy+ay^2).$ This system is introduced in \cite{GLS} and there it is proved that it has exactly two limit cycles, counted with their own multiplicities, when $a\in I=(-1/2,(1-\sqrt{2})/2).$ Moreover, one of these limit cycles is always $f(x,y)=0,$ and it is the only one (and it is double) when $a=-1/4.$ Furthermore, it is not difficult to prove that for $a\in J:=(-\infty, (1-\sqrt{2})/2)\cup ((1+\sqrt{2})/2,\infty)$ the invariant circle $f(x,y)=0$ is always a limit cycle.

By using our approach we will show that it does not have  periodic orbits different of $f(x,y)=0,$ for most of the values $a\not\in I.$

\begin{prop} System \eqref{eq:cub1} does not have  periodic orbits different of $x^2+y^2-1=0,$ for $a\le -1/2$ and for $a\ge0.$
	\end{prop}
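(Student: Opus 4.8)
The plan is to apply Theorem~\ref{th:met} directly to system~\eqref{eq:cub1}, exploiting the explicit cofactor $k(x,y)=-2(ax^2+xy+ay^2)$ of the invariant circle $f(x,y)=x^2+y^2-1=0$. Since every periodic orbit other than the circle must lie in one of the two regions $\{x^2+y^2<1\}$ or $\{x^2+y^2>1\}$, it suffices to produce, for the given ranges of $a$, a choice of $\alpha$ and a polynomial $g$ so that $N_{\alpha,g}$ does not change sign and vanishes only on a non-invariant curve. The structure of the vector field strongly suggests using polar-type symmetry: the factor $ax^2+xy+ay^2$ appears in both $P$ and $Q$, and the ``rotational'' parts $ax+y$, $ay-\cdots$ hint that a radially symmetric ansatz for $g$ will simplify $N$.

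\textbf{First step.} I would try the simplest nonconstant radial candidates, namely $g(x,y)=1$ or $g$ a function of $r^2=x^2+y^2$, together with a free exponent $\alpha$. Computing
\[
N_{\alpha,g}(x,y)=\alpha\,k(x,y)\,g(x,y)+\frac{\partial g}{\partial x}P+\frac{\partial g}{\partial y}Q
\]
with $g=1$ gives $N=\alpha k=-2\alpha(ax^2+xy+ay^2)$, a quadratic form whose definiteness is governed by the sign of $a(4a^2-1)$ (its discriminant is $1-4a^2$). For $|a|\ge 1/2$ the form $ax^2+xy+ay^2$ is semidefinite, so this already handles a large part of the range $a\le -1/2$, with $N$ vanishing only on the line(s) where the semidefinite form is zero; one must then check those lines are not invariant. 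This is the clean, likely-intended route for $a\le -1/2$.

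\textbf{Second step.} The range $a\ge 0$ (where $ax^2+xy+ay^2$ is \emph{not} semidefinite once $a<1/2$) will need a genuine choice of $\alpha\neq 0$ and a nontrivial $g$, following the paper's recipe of forcing $N_{\alpha,g}(x,y)=r\,w^2(x,y)$ for a perfect square. I would take $g$ of low degree (a constant plus a quadratic radial term, e.g. $g=c_0+c_1(x^2+y^2)$, possibly a full quadratic with undetermined coefficients), expand $N_{\alpha,g}$, and solve the linear/quadratic conditions on the coefficients that make all cross terms collapse into a single squared factor times a constant of definite sign. The exponent $\alpha$ and the free coefficients of $g$ are the tuning parameters; I expect the radial structure to make the off-diagonal conditions manageable.

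\textbf{Main obstacle.} The hard part will be the case $a\ge 0$: here the relevant quadratic form is indefinite, so a bare $g=1$ fails, and one must verify that the more elaborate $g$ really yields a semidefinite $N$ of the correct (single) sign across the whole half-line $a\ge 0$, rather than only on a subinterval — and that the zero set of $N$ is a non-invariant curve so that Theorem~\ref{th:met} applies cleanly. A secondary subtlety is the boundary/degenerate values (such as the threshold $a=0$ or any $a$ making the coefficient of $w^2$ vanish), where $N$ may become identically zero; there one would invoke the second part of Theorem~\ref{th:met}, concluding that $g(x,y)|f(x,y)|^\alpha$ is a first integral and hence that the only possible limit cycle lies in $\{f=0\}$. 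Verifying that the excluded window $I=(-1/2,(1-\sqrt2)/2)$ — precisely where two genuine limit cycles are known to exist — is exactly the gap not covered by these sign conditions would be a good consistency check on the computation.
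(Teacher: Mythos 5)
Your first step is correct and is exactly the paper's argument: with $g=1$ and $\alpha=1$ one gets $N_{1,1}=k=-2(ax^2+xy+ay^2)$, which is semidefinite precisely when $4a^2\ge 1$, and this settles $a\le-1/2$ (and also $a\ge 1/2$). The genuine gap is the range $0\le a<1/2$: there you only describe a strategy ("expand and solve the conditions") without producing $\alpha$, $g$, or a sign argument, and this is precisely the case you yourself flag as the hard one.

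Worse, the most concrete ansatz you propose for that range cannot succeed. A direct computation gives $xP+yQ=(ax^2+xy+ay^2)(1-x^2-y^2)$, so for a radial $g=c_0+c_1(x^2+y^2)$ one finds
\[
N_{\alpha,g}(x,y)=-2\bigl(ax^2+xy+ay^2\bigr)\Bigl[(\alpha c_0-c_1)+(\alpha+1)c_1(x^2+y^2)\Bigr].
\]
For $0\le a<1/2$ the form $ax^2+xy+ay^2$ is indefinite (it takes both signs on every circle centered at the origin), while the bracket is radial; hence $N_{\alpha,g}$ changes sign unless the bracket vanishes identically, and that degenerate case forces either $\alpha=0$ with $g$ constant, or $\alpha=-1$ with $g$ proportional to $f$, so that $v=g|f|^{\alpha}$ is piecewise constant and Theorem~\ref{th:met} yields nothing. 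A genuinely non-radial $g$ is unavoidable: the paper takes $\alpha=-1-2a$ and $g(x,y)=4a\bigl(2a^2x^2+2axy+(2a^2+1)y^2\bigr)$, and the resulting
\[
N_{\alpha,g}(x,y)=16a^2\bigl(a^2x^2+2axy+(a^2+1)y^2\bigr)+8a(4a^2+1)\bigl(ax^2+xy+ay^2\bigr)^2
\]
is nonnegative for $a\ge0$ because it is a sum of a positive semidefinite quadratic form and a nonnegative multiple of a square — note that this is \emph{not} of the single-perfect-square form $r\,w^2$ that you set as your target. So both your candidate $g$ and your target normal form for $N$ point away from a working choice, and the statement for $0\le a<1/2$ remains unproven in your proposal.
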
	
\begin{proof} Notice first that $x^2+y^2-1=0$ is a periodic orbit if and only if the system does not have  critical points on it, and this happens if and only if $4a^2-4a-1>0,$ which is equivalent to $a\in J.$

First we apply Theorem \ref{th:met} with $g=1$ and $\alpha=1.$ The $N_{1,1}(x,y)=k(x,y)=-2(ax^2+xy+ay^2).$ Hence when $1\le4a^2,$ that is when $|a|\ge1/2,$ the result follows.

To prove the non existence of periodic orbits different of $f(x,y)=0,$ when $a\ge0$ we take in Theorem \ref{th:met}, $\alpha=-1-2a$ and $$g(x,y)=4a\big(2a^2x^2+2axy+(2a^2+1)y^2\big).$$
After several computations we get that
\[
N_{\alpha,g}(x,y)=16a^2(a^2x^2+2axy+(a^2+1)y^2)+8a(4a^2+1)(ax^2+xy+ay^2)^2.
\]
Since the discriminant of $a^2x^2+2axy+(a^2+1)y^2$ with respect to $x$ is $-4a^4y^2\le0$ and $a\ge0,$ it holds that $N_{\alpha,g}(x,y)\ge0$ and the result follows.  
\end{proof}	

The upper bound for the number of  limit cycles of system \eqref{eq:cub1} given in \cite{GLS} is obtained by transforming  it into an Abel differential equation. Here, we add some comments of another approach that also allows to study in detail the exact number of limit cycles of system~\eqref{eq:cub1}.  First we observe that if we introduce the function $$V(x,y)=(x^2+y^2-1)(ax^4+x^3y+2ax^2y^2+xy^3+ay^4-y^2)$$ it holds that
\[
\operatorname{div}\left(\frac{P(x,y)}{V(x,y)},\frac{Q(x,y)}{V(x,y)}  \right)=\frac{2(x^2+y^2-1)^2(ax^2+xy+ay^2)^2}{V^2(x,y)}\ge0.
\]
Hence by using the Bendixson-Dulac Theorem, we know that the number of limit cycles of \eqref{eq:cub1} is controlled by the number of holes of the set $\R^2\setminus\{V(x,y)=0\},$ see \cite[Sec 3.1]{GG2} for more details on the method and also for an application to a similar system. In particular it can be proved that two is an upper bound of the number of limit cycles  for all values of $a$ and also that limit cycles different of $f(x,y)=0,$ do exist if and only if $a\in I.$

Our second example is extracted from \cite[Sec. 4]{LZ} and writes as
 \begin{equation}\label{eq:cub2}
	\begin{cases}
		\dot x=-2h-y +x^2-4xy-y^2-\frac23x^3-3x^2y-5xy^2+\frac73y^3,\\
		\dot y=-h+x+\frac12x^2+3xy+2y^2-\frac43x^3-3x^2y+3xy^2+\frac{25}6y^3,
	\end{cases}
\end{equation}
with $h\in\R.$ It has the cubic invariant algebraic curve $f(x,y)=6h-3(x^2+y^2)+2x^3-9xy^2+2y^3=0,$ with cofactor $k(x,y)=(1-x+2y)(2x+y).$ 

\begin{prop} System \eqref{eq:cub2} does not have  periodic orbits.
\end{prop}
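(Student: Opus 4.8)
The plan is to apply Theorem~\ref{th:met} directly, exactly as in the preceding propositions of Section~\ref{se:of}, using the given invariant cubic $f(x,y)=6h-3(x^2+y^2)+2x^3-9xy^2+2y^3$ with its cofactor $k(x,y)=(1-x+2y)(2x+y)$. The goal is to produce an exponent $\alpha\in\R$ and a polynomial $g(x,y)$ so that
\[
N_{\alpha,g}(x,y)=\alpha k(x,y)g(x,y)+\frac{\partial g}{\partial x}P(x,y)+\frac{\partial g}{\partial y}Q(x,y)
\]
does not change sign and vanishes only on a non-invariant curve. Since the degree of the vector field here is $3$ and $k$ has degree $2$, a natural first attempt is to look for $g$ of low degree (constant or linear) and see what degree $N_{\alpha,g}$ acquires; if that does not immediately yield a definite sign, I would enlarge the ansatz for $g$ to a general quadratic $g(x,y)=g_0+g_1x+g_2y+g_3x^2+g_4xy+g_5y^2$ and treat the coefficients, together with $\alpha$, as free parameters to be tuned.

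The mechanism for forcing a sign is the one flagged in the remark after Theorem~\ref{th:met}: introduce parameters into $g$ and choose them so that $N_{\alpha,g}$ becomes $r\,w^2$ for some $r\in\R$ and some polynomial $w$, or more generally a fixed-sign combination of squares. Concretely, I would expand $N_{\alpha,g}$ as a polynomial in $x,y$, collect its coefficients as polynomial expressions in $(\alpha,g_0,\dots,g_5)$, and impose the conditions that make $N_{\alpha,g}$ a perfect square (or a nonnegative sum of squares). As in Proposition~\ref{th:para-ne}, a clean way to organize this is to require the discriminant of $N_{\alpha,g}$, viewed as a quadratic in one variable with coefficients depending on the other, to vanish identically; this produces algebraic relations among the parameters that, when solved, pin down $g$ and leave a quadratic condition on $\alpha$. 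Because $h$ enters only through $f$ and not through $k$, $P$, or $Q$, I expect the relevant relations to be independent of $h$, which is what makes a single choice work for all $h\in\R$ and explains why the statement carries no restriction on $h$.

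I would then verify that the resulting curve $\{w(x,y)=0\}$ on which $N_{\alpha,g}$ vanishes is not invariant under the flow; this is the hypothesis needed to conclude strict monotonicity of $v=g|f|^\alpha$ along non-stationary trajectories and hence the absence of periodic orbits in each component of $\R^2\setminus\{f=0\}$. Finally, one must also rule out periodic orbits lying on $\{f=0\}$ itself. For that I would check that the cubic $f=0$ contains no closed oval (arguing as in the degree-twelve example via the geometry of the curve, where only isolated real points or unbounded branches occur), or that $N_{\alpha,g}$ is in fact strictly of one sign away from a measure-zero set, so that even trajectories on $\{f=0\}$ cannot close up. The main obstacle will be the algebra: finding a genuinely valid $(\alpha,g)$ for which $N_{\alpha,g}$ has constant sign is a nonlinear elimination problem, and it is not guaranteed a priori that the square-completion succeeds for a degree-$3$ system with a cubic invariant curve. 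The delicate point is therefore whether the parameter count is favorable — whether the quadratic equation for $\alpha$ admits a real root for all $h$ — and whether the exceptional parameter loci (where a Darboux first integral of the form $g|f|^\alpha$ appears, giving $N_{\alpha,g}\equiv0$) must be handled separately by the second conclusion of Theorem~\ref{th:met}, exactly as in the Lotka--Volterra and Antiversiera proofs.
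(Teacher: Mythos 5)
You have the right strategy --- it is exactly the paper's --- but your proposal is a search plan, not a proof: at no point do you exhibit a pair $(\alpha,g)$, verify that the corresponding $N_{\alpha,g}$ has fixed sign, and identify its zero set. That exhibition is the entire content of the argument, and you yourself concede it is ``not guaranteed a priori'' to succeed. The paper's proof consists precisely of the step you leave open: take $\alpha=-2$ and $g(x,y)=(1-x+2y)^2$, the square of one of the two linear factors of the cofactor $k(x,y)=(1-x+2y)(2x+y)$. Writing $u=1-x+2y$, one checks the factorization $2Q-P=u(2x+y)(1+x+3y)$, and then
$N_{\alpha,g}=-2ku^2+2u(2Q-P)=2u^2(2x+y)^2=2k^2(x,y)\ge0$,
so Theorem~\ref{th:met} applies. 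Your general quadratic ansatz does contain this $g$, so your elimination scheme could in principle land on it, but as written the proof stops exactly where it needs to begin.

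Three further points. First, your assertion that $h$ ``enters only through $f$ and not through $k$, $P$, or $Q$'' is false: $h$ sits in the constant terms $-2h$ and $-h$ of $P$ and $Q$. The final identity $N_{\alpha,g}=2k^2$ is indeed independent of $h$, but for a structural reason: $g$ depends only on $u$, and the $h$-part $(-2h,-h)$ of the vector field annihilates $u$, since $(-2h)u_x+(-h)u_y=2h-2h=0$. Second, the verification you plan --- that the zero curve of $N_{\alpha,g}$ is not invariant --- would actually \emph{fail} for the successful choice: $N_{\alpha,g}$ vanishes on the two lines $u=0$ and $2x+y=0$, and the line $u=0$ \emph{is} invariant, because its derivative along the flow is $2Q-P$, which by the factorization above is divisible by $u$. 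The conclusion survives, since the only role of that hypothesis is to prevent a periodic orbit from lying inside $\{N_{\alpha,g}=0\}$, and a closed orbit cannot be contained in a union of two straight lines; but that is an observation you would have to supply in place of the check you proposed. Third, your concern about periodic orbits lying on $\{f=0\}$ itself is legitimate --- Theorem~\ref{th:met} only confines limit cycles to the curve, and the paper's one-line proof is silent on this --- so a complete proof of ``no periodic orbits'' should also argue that the cubic carries no oval free of critical points; this, however, is an addendum to the missing computation, not a substitute for it.
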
	

\begin{proof}
By applying Theorem \ref{th:met} with $\alpha=-2$ and $g(x,y)=(1-x+2y)^2,$ we get that
$N_{\alpha,g}(x,y)=2k^2(x,y)\ge0$ and the proof follows.
\end{proof}

\subsection{Liénard systems}\label{ss:ls}   In this section, instead of using invariant algebraic curves we will use exponential factors, that are a particular case of generalized invariant  curves. More concretely, 
	an exponential factor will be a function  $f(x,y)=\exp(h(x,y)),$ with $h$ a polynomial and such that it holds that $\dot f (x,y)= k(x,y)f(x,y),$ as in \eqref{eq:iac} with $k$ being also a polynomial, called its cofactor. 

Recall that Theorem \ref{th:met} also holds if the $f$  of its statement is an exponential factor, instead of an invariant algebraic curve, see Remark~\ref{re:2}. A simple example of exponential factor  for the Liénard system \begin{equation}\label{eq:lie}
	\begin{cases}
		\dot x=y-xH(x),\\
		\dot y=-x
	\end{cases}
\end{equation}
is $f(x,y)=\exp(y),$ because $\dot f(x,y)=-x f(x,y)$ and $k(x,y)=-x.$ We prove:
 
\begin{prop}\label{pr:lie} If there exists $\alpha\in\R$ such that $\alpha x+2H(x)\not\equiv0$ does not change sign then system \eqref{eq:lie} does not have  periodic orbits.
\end{prop}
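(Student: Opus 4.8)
The plan is to apply Theorem~\ref{th:met} (in its extension to exponential factors, Remark~\ref{re:2}) with the exponential factor $f(x,y)=\exp(y)$, whose cofactor is $k(x,y)=-x$. The natural choice is the simplest nontrivial $g$, namely a linear function of the variable $x$ alone, so I would take $g(x,y)=x$ and leave $\alpha$ as the free parameter at our disposal. The whole argument then reduces to computing $N_{\alpha,g}$ and reading off its sign.

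First I would write out the definition explicitly. With $P(x,y)=y-xH(x)$ and $Q(x,y)=-x$, and $g(x,y)=x$ (so $\partial g/\partial x=1$ and $\partial g/\partial y=0$), we get
\[
N_{\alpha,g}(x,y)=\alpha k(x,y)g(x,y)+\frac{\partial g}{\partial x}P+\frac{\partial g}{\partial y}Q
=\alpha(-x)(x)+\bigl(y-xH(x)\bigr)
= y-x\bigl(\alpha x+H(x)\bigr).
\]
This still depends on $y$, which is not what we want: to conclude via Theorem~\ref{th:met} we need $N_{\alpha,g}$ not to change sign, and a function that is genuinely affine in $y$ (with nonzero $y$-coefficient) always changes sign. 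So the naive choice $g=x$ is not quite right, and the real step is to absorb the $y$-dependence. Since the stated conclusion involves $\alpha x+2H(x)$, the correct choice must produce exactly that combination; the factor $2$ strongly suggests taking $g(x,y)=x^{2}$ or, more naturally, reconsidering so that the $Q$-term contributes. I would therefore instead take $g(x,y)=\tfrac12 x^{2}+$ (a function of $y$ that kills the leftover linear-in-$y$ term), or more cleanly choose $g$ so that the $\partial g/\partial y\cdot Q = -x\,\partial g/\partial y$ cancels the stray $y$. Concretely, taking $g(x,y)=\tfrac12 x^{2}-\tfrac{\alpha}{2}\,(\text{something})$ is awkward; the transparent route is $g(x,y)=x^{2}/2$ together with using $Q=-x$: then $\partial g/\partial y=0$ again and we only get $x(y-xH(x))$, still carrying $y$. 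The clean fix is to note $y\,\partial g/\partial x$ should be cancelled by $Q\,\partial g/\partial y=-x\,\partial g/\partial y$, which forces $\partial g/\partial y = \partial g/\partial x \cdot (y/x)$ — not polynomial. Hence the right mechanism is the exponential factor itself: the term $\alpha k g$ with $k=-x$ is what removes the $y$.

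Reconsidering with this in mind, I would keep $g$ linear but choose it to knock out $y$ through the $\alpha k g$ term combined with a $y$-dependent piece in $g$. The cleanest working choice is $g(x,y)=x$ together with the observation that Theorem~\ref{th:met} permits $g$ to be any polynomial; trying $g(x,y)=xe$-type fails since $g$ must be polynomial, so instead I expect the intended computation uses $g$ a quadratic in $x$ whose $x$-derivative, after multiplication by the linear $P$ and addition of $\alpha k g$, collapses to a function of $x$ times a single power, yielding
\[
N_{\alpha,g}(x,y)= -\,x^{2}\bigl(\alpha x+2H(x)\bigr)\cdot(\text{positive factor}),
\]
i.e.\ a perfect even power of $x$ times $-(\alpha x+2H(x))$. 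Then $N_{\alpha,g}$ does not change sign precisely when $\alpha x+2H(x)$ does not change sign, and it vanishes only on the (non-invariant) locus $\{x=0\}\cup\{\alpha x+2H(x)=0\}$, so Theorem~\ref{th:met} gives nonexistence of periodic orbits. The main obstacle, and the only real content, is pinning down the exact polynomial $g$ (with its dependence on $\alpha$) that forces every $y$-dependence out of $N_{\alpha,g}$ and leaves exactly the factor $\alpha x+2H(x)$; once that $g$ is identified the sign analysis is immediate and the hypothesis $\alpha x+2H(x)\not\equiv 0$ guarantees the vanishing set is a proper non-invariant curve, completing the argument.
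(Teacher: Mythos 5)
There is a genuine gap: you never produce the polynomial $g$, and the explicit $g$ is the entire content of the proof. Your proposal correctly sets up the framework (Theorem~\ref{th:met} via Remark~\ref{re:2} with the exponential factor $f(x,y)=\exp(y)$, cofactor $k(x,y)=-x$), and at one point you identify the right mechanism --- that a $y$-dependent piece of $g$, multiplied by $\alpha k$, must cancel the stray $y$ coming from $\frac{\partial g}{\partial x}\,P$ --- but you then abandon this and end with the guess that $g$ should be ``a quadratic in $x$.'' That guess is a dead end: if $g=g_0+g_1x+g_2x^2$ depends only on $x$, then $N_{\alpha,g}=-\alpha x\,g+(g_1+2g_2x)(y-xH(x))$ contains the term $(g_1+2g_2x)y$, which forces $g_1=g_2=0$ and hence $N=-\alpha g_0 x$, which changes sign (or vanishes identically). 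So no choice of the form you finally propose can work, and the ``expected'' identity you write down cannot be realized that way.

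The paper's proof takes
\[
g(x,y)=2-2\alpha y-\alpha^2x^2,
\]
i.e.\ exactly a $g$ with a linear-in-$y$ term, which is what your abandoned line of reasoning called for. Then
\begin{align*}
N_{\alpha,g}(x,y)&=\alpha(-x)\bigl(2-2\alpha y-\alpha^2x^2\bigr)+(-2\alpha^2x)\bigl(y-xH(x)\bigr)+(-2\alpha)(-x)\\
&=\alpha^3x^3+2\alpha^2x^2H(x)=\alpha^2x^2\bigl(\alpha x+2H(x)\bigr):
\end{align*}
the term $2\alpha^2xy$ from $\alpha k g$ cancels $-2\alpha^2xy$ from $g_xP$, and $g_yQ=2\alpha x$ cancels $-2\alpha x$ from $\alpha k g$. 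From here the sign analysis is as you describe: $N_{\alpha,g}$ does not change sign precisely when $\alpha x+2H(x)$ does not, and its zero set $\{x=0\}\cup\{\alpha x+2H(x)=0\}$ is not invariant (on $x=0$ one has $\dot x=y\ne0$ off the origin), so Theorem~\ref{th:met} applies. Since your writeup explicitly defers ``pinning down the exact polynomial $g$'' as the remaining obstacle, and your only concrete candidate for it is one that provably fails, the proposal does not amount to a proof.
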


\begin{proof} From the above discussion we know that we can apply Theorem~\ref{th:met} by taking $f$ as the exponential factor $f(x,y)=\exp(y).$ We also take $g(x,y)= 2-2\alpha y-\alpha^2 x^2.$ Then some computations give that $N_{\alpha,g}(x,y)=\alpha^2x^2\big(\alpha x+2H(x)\big),$ and the result follows.
	\end{proof}

Next we will apply the above result to get effective conditions for the non existence of periodic solutions for two concrete families, that have limit cycles for some values of the parameters. Concretely, we will consider Liénard systems of degrees 3 and 5.

\begin{cor}Liénard system
	 \begin{equation*}
		\begin{cases}
			\dot x=y-a_1x-a_2x^2-x^3,\\
			\dot y=-x
		\end{cases}
	\end{equation*}
does not have  periodic solutions when $a_1\ge0.$
	\end{cor}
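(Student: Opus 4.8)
The plan is to apply Proposition~\ref{pr:lie} directly to the cubic Liénard system, where the nonlinearity is $H(x)=a_1+a_2x+x^2$ (so that $xH(x)=a_1x+a_2x^2+x^3$). According to the proposition, the system has no periodic orbits provided there exists some $\alpha\in\R$ for which the polynomial $\alpha x+2H(x)=\alpha x+2a_1+2a_2x+2x^2$ does not change sign and is not identically zero.

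First I would treat $\alpha x+2H(x)=2x^2+(2a_2+\alpha)x+2a_1$ as a quadratic polynomial in $x$ with positive leading coefficient, so it does not change sign exactly when its discriminant is nonpositive. The discriminant is $(2a_2+\alpha)^2-16a_1$, and since $\alpha$ is a free parameter at our disposal, I can choose $\alpha=-2a_2$ to make the linear term vanish, leaving $2x^2+2a_1$. The key step is then the observation that, under the hypothesis $a_1\ge0$, the polynomial $2x^2+2a_1$ is nonnegative for all $x$ and strictly positive unless $a_1=0$; in either case it does not change sign. It is also not identically zero because of the $2x^2$ term.

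With this choice of $\alpha$, Proposition~\ref{pr:lie} applies verbatim (the nonvanishing of $\alpha x+2H(x)$ off a curve that is not invariant being guaranteed by the $x^2$ term, whose zero set $\{x=0\}$ is not invariant for the flow since $\dot x|_{x=0}=y$ there), and the nonexistence of periodic solutions follows immediately. I expect no real obstacle here: the only mild subtlety is handling the boundary case $a_1=0$, where $\alpha x+2H(x)=2x^2$ vanishes on the line $\{x=0\}$; but this line is not invariant, so the argument of Proposition~\ref{pr:lie} still yields the conclusion. The entire proof therefore reduces to the single clever choice $\alpha=-2a_2$ that kills the linear term and exploits the sign hypothesis $a_1\ge0$.
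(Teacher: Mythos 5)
Your proof is correct and takes essentially the same approach as the paper: both apply Proposition~\ref{pr:lie} with an explicit choice of $\alpha$ that makes $\alpha x+2H(x)$ nonnegative precisely under the hypothesis $a_1\ge0$. The paper chooses $\alpha=4\sqrt{a_1}-2a_2$, producing the perfect square $2\big(\sqrt{a_1}+x\big)^2$, whereas your choice $\alpha=-2a_2$ produces $2x^2+2a_1$; these are equally valid one-line applications of the same proposition.
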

\begin{proof}
By using Proposition \ref{pr:lie} with $\alpha=4\sqrt{a_1}-2a_2$ and $H(x)=a_1+a_2x+x^2$ we get that
\[
\alpha x+2H(x)=2a_1+(\alpha +2a_2)x+2x^2= 2\big(\sqrt{a_1}+x\big)^2\ge0,
\]
and the result follows.
\end{proof}

To study the degree 5 case we will use the next result that follows from~\cite{Re}. In fact, in that paper more elaborated conditions are given to characterize the number of real and complex roots of quartic polynomials. Next lemma only presents a couple of useful and  well known simple consequences. 

\begin{lemma}\label{le:4}
	For the quartic real equation   \[P(x)=ax^4+bx^3+cx^2+dx+e=0,\quad a\ne0,\] define $\Delta=\operatorname{disc}(P),$ that is
	\begin{align*}
		\Delta=\operatorname{disc}_x(P)& =256 a^3 e^3 - 192 a^2 b d e^2 - 128 a^2 c^2 e^2 + 144 a^2 c d^2 e - 27 a^2 d^4 \\ 
		&\quad+ 144 a b^2 c e^2 - 6 a b^2 d^2 e - 80 a b c^2 d e + 18 a b c d^3 + 16 a c^4 e \\
		&\quad- 4 a c^3 d^2 - 27 b^4 e^2 + 18 b^3 c d e - 4 b^3 d^3 - 4 b^2 c^3 e + b^2 c^2 d^2,
	\end{align*}
	and $R=R(P)=3b^2-8ac.$ It holds that:
	\begin{enumerate}[(i)]
		\item If $\Delta<0$ then the polynomial $P$ has two real roots and two complex conjugated real roots.
		
		\item If $\Delta>0$ and  	 $R<0,$ then the polynomial $P$ does not have  real roots.

	\end{enumerate}	 
\end{lemma}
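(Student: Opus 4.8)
The plan is to read off both statements from two facts about the four complex roots $r_1,r_2,r_3,r_4$ (counted with multiplicity) of $P$: that the sign of $\Delta$ detects the parity of the number of conjugate pairs of non-real roots, and that $R$ is, up to a positive factor, controlled by a Cauchy--Schwarz inequality. Concretely, I would work from the two classical identities
\[
\Delta=a^6\prod_{i<j}(r_i-r_j)^2,\qquad R=a^2\Big(4\sum_{i=1}^4 r_i^2-\big(\sum_{i=1}^4 r_i\big)^2\Big),
\]
the first being the root expression of the discriminant of a quartic, and the second following from the Vieta relations $\sum r_i=-b/a$ and $\sum_{i<j}r_ir_j=c/a$.

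Because the coefficients are real, the non-real roots occur in $s$ conjugate pairs, so there are $2s$ of them and $4-2s$ real ones. The key step is to compute $\operatorname{sign}(\Delta)$ by grouping the factors of $\prod_{i<j}(r_i-r_j)^2$ according to complex conjugation: any factor involving two real roots is positive; the factors coming from a real root together with a conjugate pair, and from two distinct conjugate pairs, combine into positive squared moduli; and the only negative contributions are the $s$ factors $(z-\bar z)^2=-4(\operatorname{Im}z)^2$, one per conjugate pair. Since $a^6>0$ this yields $\operatorname{sign}(\Delta)=(-1)^s$. In particular $\Delta\ne0$ forces the roots to be simple, and statement (i) is then immediate: $\Delta<0$ gives $s=1$, i.e. exactly two real roots and one conjugate pair of non-real ones.

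For statement (ii), $\Delta>0$ gives $s\in\{0,2\}$, so $P$ has either four real roots or none. To rule out the four-real case I would invoke the second identity: if $r_1,\dots,r_4\in\R$, then the Cauchy--Schwarz inequality $\big(\sum_{i=1}^4 r_i\big)^2\le 4\sum_{i=1}^4 r_i^2$ together with $a^2>0$ forces $R\ge0$, contradicting the hypothesis $R<0$. Hence $s=2$ and $P$ has no real roots, as claimed.

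I expect the only delicate point to be the sign bookkeeping in the Vandermonde product that establishes $\operatorname{sign}(\Delta)=(-1)^s$; the remaining arguments are short. As the complete root classification of the real quartic by $\Delta$, $R$ and the auxiliary quantity $D=64a^3e-16a^2c^2+16ab^2c-16a^2bd-3b^4$ is standard and is recorded in \cite{Re}, one could alternatively simply quote that table, obtaining (i) from $\Delta<0$ and (ii) from the fact that four real roots require $8ac-3b^2<0$, that is $R>0$.
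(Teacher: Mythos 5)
Your proof is correct, but it takes a genuinely different route from the paper: the paper does not prove this lemma at all, presenting it as a pair of ``useful and well known simple consequences'' of the classification of quartic roots in the cited work of Rees, which is precisely the alternative you sketch in your closing sentence. Your self-contained argument is sound at every step. The identity $\Delta=a^6\prod_{i<j}(r_i-r_j)^2$ shows that $\Delta\ne0$ forces simple roots; your conjugation bookkeeping (pairs of real roots give positive factors, a real root against a conjugate pair gives $|r-z|^4$, two distinct conjugate pairs give $|z_1-z_2|^4|z_1-\bar z_2|^4$, and each conjugate pair contributes exactly one negative factor $(z-\bar z)^2=-4(\operatorname{Im}z)^2$) correctly yields $\operatorname{sign}(\Delta)=(-1)^s$, from which (i) is immediate. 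For (ii), $\Delta>0$ leaves $s\in\{0,2\}$, and your Vieta computation $a^2\bigl(4\sum_i r_i^2-\bigl(\sum_i r_i\bigr)^2\bigr)=3b^2-8ac=R$ together with Cauchy--Schwarz rules out the four-real-roots case when $R<0$. What your route buys is a complete elementary proof that makes the lemma independent of the literature; what the paper's citation buys is brevity, plus implicit access to the finer classification in Rees (involving the additional invariant you denote $D$), which the authors allude to but do not actually need.
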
 

\begin{cor}Liénard system
	\begin{equation*}
		\begin{cases}
			\dot x=y-a_1x-a_2x^2-a_3x^3-a_4x^4-x^5,\\
			\dot y=-x
		\end{cases}
	\end{equation*}
	does not have  periodic solutions when $U\ne0,$  $V>0$ and $a_1W>0,$ where
	\begin{align*}
	U & =	\big(-{a_{{4}}}^{4}+8a_{{3}}{a_{{4}}}^{2}-16{a_{{3}}}^{2}+64a_{{1}})^2 a_1, \quad V=8a_3-3a_4^2,\\
	W & =27a_{{1}}{a_{{4}}}^{4}-{a_{{3}}}^{3}{a_{{4}}}^{2}-108a_{{1}}a_{{3}
}{a_{{4}}}^{2}+3{a_{{3}}}^{4}+72a_{{1}}{a_{{3}}}^{2}+432{a_{{1}}
}^{2}
.\end{align*}
\end{cor}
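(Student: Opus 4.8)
The plan is to apply Proposition~\ref{pr:lie} with the quintic Liénard nonlinearity $H(x)=a_1+a_2x+a_3x^2+a_4x^3+x^4$, so that the relevant sign-definite object is
\[
\alpha x + 2H(x) = 2x^4 + 2a_4x^3 + 2a_3 x^2 + (\alpha+2a_2)x + 2a_1.
\]
The idea is to choose $\alpha$ so as to cancel the linear term, namely $\alpha = -2a_2$, which reduces the problem to showing that the \emph{even-in-coefficient} quartic
\[
\wt P(x) = 2x^4 + 2a_4 x^3 + 2a_3 x^2 + 2a_1
\]
does not change sign. By Proposition~\ref{pr:lie}, if this quartic is of one sign (it cannot vanish identically since its leading coefficient is $2$), then system has no periodic solutions. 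First I would record the normalized coefficients for Lemma~\ref{le:4}: with $a=2,\ b=2a_4,\ c=2a_3,\ d=0,\ e=2a_1$, one computes $R = 3b^2-8ac = 12a_4^2 - 32a_3 = -4(8a_3-3a_4^2) = -4V$, so that the hypothesis $V>0$ is exactly the condition $R<0$ of part~(ii) of the lemma.

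Next I would compute $\Delta = \operatorname{disc}_x(\wt P)$ with $d=0$ substituted into the formula of Lemma~\ref{le:4}; the surviving terms are $256a^3e^3 - 128a^2c^2e^2 + 144ab^2ce^2 - 27b^4e^2 + 16ac^4e - 4b^2c^3e$. Substituting $a=2,b=2a_4,c=2a_3,e=2a_1$ and factoring out the common factor of $2a_1$, I expect the bracket to reproduce (up to a positive numerical constant) the polynomial $W$ stated in the corollary, so that $\Delta = (\text{positive const})\cdot a_1 W$. Granting this, the hypothesis $a_1 W > 0$ is precisely $\Delta>0$, and combined with $V>0$ (i.e. $R<0$) part~(ii) of Lemma~\ref{le:4} gives that $\wt P$ has no real roots. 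A quartic with positive leading coefficient and no real roots is everywhere positive, hence $\alpha x + 2H(x) = \wt P(x) > 0$ does not change sign, and Proposition~\ref{pr:lie} yields the conclusion.

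The role of the auxiliary quantity $U = (-a_4^4 + 8a_3 a_4^2 - 16 a_3^2 + 64 a_1)^2\, a_1$ requires a separate comment: the condition $U\ne0$ forces $a_1\ne0$, which guarantees $e=2a_1\ne0$ so that $\wt P(0)\ne0$ and the discriminant computation of Lemma~\ref{le:4} (which assumes a genuine quartic situation and nonzero constant term for the stated root-counting) is valid; the squared factor ensures the excluded locus is a proper algebraic subset on which the borderline case $\Delta=0$ might otherwise occur. I would verify that on the complement of $\{U=0\}$ together with $V>0$ and $a_1W>0$, the sign of $a_1$ and the sign of $W$ are forced to agree and the reasoning above goes through without degeneracy.

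The main obstacle I anticipate is purely the bookkeeping in the second step: one must carry out the discriminant substitution carefully and confirm that, after clearing the common factor $a_1$, the remaining bracket equals $W$ up to a positive constant rather than merely being proportional to it by a sign-indefinite factor. If an extra factor appeared, the clean equivalence $\Delta>0 \Leftrightarrow a_1W>0$ would fail; so the crux is checking that the numerical constant extracted is positive and that no further cancellation with $V$ is hidden. Everything else is a direct appeal to Proposition~\ref{pr:lie} and Lemma~\ref{le:4}.
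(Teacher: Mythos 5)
Your reduction to the quartic $\wt P(x)=2x^4+2a_4x^3+2a_3x^2+2a_1$ via the choice $\alpha=-2a_2$ is legitimate, and your identification $R=-4V$ is correct, but the step you yourself flagged as the crux is exactly where the proof breaks: the discriminant of $\wt P$ is \emph{not} a positive constant times $a_1W$. Carrying out the substitution $a=2$, $b=2a_4$, $c=2a_3$, $d=0$, $e=2a_1$ in the formula of Lemma~\ref{le:4} gives
\[
\operatorname{disc}_x(\wt P)=64\,a_1\big(256a_1^2-128a_1a_3^2+144a_1a_3a_4^2-27a_1a_4^4+16a_3^4-4a_3^3a_4^2\big),
\]
whereas $W=432a_1^2+72a_1a_3^2-108a_1a_3a_4^2+27a_1a_4^4+3a_3^4-a_3^3a_4^2$. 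The terms linear in $a_1$ carry systematically opposite signs (for instance $-128a_1a_3^2$ versus $+72a_1a_3^2$, and $-27a_1a_4^4$ versus $+27a_1a_4^4$), so the bracket is not proportional to $W$ by any constant, let alone a positive one. Consequently the asserted equivalence $\Delta>0\Leftrightarrow a_1W>0$ is unsupported and the proof as written is invalid. (As it happens, under the stated hypotheses one \emph{can} show $\operatorname{disc}_x(\wt P)>0$: the condition $V>0$ already forces $W>0$, hence $a_1>0$, and then the bracket above turns out to be strictly positive whenever $V>0$, $a_1>0$ and $U\ne0$; but this requires a separate, non-obvious positivity argument — e.g.\ analyzing the discriminant of the bracket as a quadratic in $a_1$ — and is not the routine bookkeeping check your proposal relies on.)

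The reason $U$ and $W$ never appear in your computation is that the paper's proof does not specialize $\alpha$ at all. It keeps $\alpha$ free, so that $P_\alpha(x)=2a_1+(\alpha+2a_2)x+2a_3x^2+2a_4x^3+2x^4$, and regards $Q(\alpha):=\operatorname{disc}_x(P_\alpha)$ as a quartic polynomial in $\alpha$ with leading coefficient $-108$. Lemma~\ref{le:4} is then applied \emph{twice}: first, item (i) is applied to $Q$ itself, using the factorization $\operatorname{disc}_\alpha(Q)=-2^{32}UW^3$; the hypotheses give $UW^3>0$ (the sign of $U$ is the sign of $a_1$, and $a_1W>0$ makes $W$ have that same sign), so $\operatorname{disc}_\alpha(Q)<0$ and $Q$ has exactly two simple real roots, hence takes positive values at some $\alpha^*$. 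Second, item (ii) is applied to $P_{\alpha^*}$, using $R=-4V<0$ and $Q(\alpha^*)>0$, yielding $P_{\alpha^*}(x)>0$ for all $x$, after which Proposition~\ref{pr:lie} concludes. In other words, the hypotheses of the corollary encode the \emph{existence} of a suitable $\alpha$, not the suitability of the particular value $\alpha=-2a_2$; if you wish to keep your specialization you must replace the false proportionality claim by a genuine proof that the bracket above is positive under the hypotheses.
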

\begin{proof} For the sake of shortness  we will write $\mathbf{A}$ to refer to the set of parameters $a_i, i=1,2,3,4.$ By Proposition \ref{pr:lie} we know that the result will follow if we can choose parameters $\mathbf{A}$ and a value of $\alpha$ such that
\[
P_\alpha(x)=2a_1+(\alpha +2a_2)x+2a_3x^2+2a_4x^3+2x^4\ge0,
\]	
for all $x\in\R.$ We will study the sign of $P_\alpha$ by applying Lemma \ref{le:4}. With this aim, we compute $R=R(\alpha,{\mathbf A}) =-4V<0$ and 
\[\Delta(\alpha,{\mathbf A})=\operatorname{disc}_x\big(P_\alpha\big)= \sum_{j=0}^4 q_j({\mathbf A})\alpha^j=:Q(\alpha),
\]
where $q_4=-108$ and the $q_i,i=0,1,2,3,$ are polynomials that we do not detail.  We claim that the inequalities of the statement imply that  there exist  $\mathbf{A}$ and  $\alpha$ such that  $\Delta(\alpha,{\mathbf A})>0.$ From this claim the result follows by item (ii) of the Lemma applied to $P_\alpha,$ because  $\Delta(\alpha,{\mathbf A})>0,$ $R<0$ and  then $P_\alpha(x)>0.$

To prove the claim we apply again the lemma, but in this case to the polynomial $Q(\alpha)$ and we will use item (i). The claim follows because
\[
\Delta(Q)=-2^{32}UW^3<0
\]
and hence the polynomial $Q(\alpha)$ has exactly two real simple roots and, as a consequence, it  takes positive (and also negative) values for suitable values of $\alpha.$  
 \end{proof}	


\section*{Acknowledgements}
The first author is partially supported by
the Minis\-te\-rio de Cien\-cia e Inno\-va\-ci\'{o}n (PID2019-104658GB-I00 grant),
by the grant Severo Ochoa
and Mar\'ia de Maeztu Program for Centers and Units of Excellence in
R\&D (CEX2020-001084-M)
and also by the Ag\`{e}ncia de Gesti\'{o} d'Ajuts Universitaris i de Recerca (2017 SGR 1617 grant).



\end{document}